\documentclass[a4paper, 11pt]{amsart}

\usepackage{custom}
\addbibresource{bibLaTeX.bib}
\addbibresource{eigene.bib}

\usepackage{todonotes}

\newcommand{\Helvetica}[2][]{#1}

\title{A volume comparison theorem for characteristic numbers}

\thanks{The author has been supported by the DFG Research Training Group 1493 \textit{Mathematische Strukturen in der modernen Quantenphysik} and the ISF project \textit{Action now: geometry and dynamics of group actions}. He also thanks his PhD supervisor Prof.\ Dr.\ Thomas Schick for his guidance and support during the author's PhD project from which this paper developed.}

\author[D.~Luckhardt]{Daniel Luckhardt$^{*}$}
\address[D.~Luckhardt]{Department of Mathematics, Ben-Gurion University of the Negev, Israel.}
\email{luckhard@post.bgu.ac.il}

\begin{document}

\begin{abstract}
	We show that assuming lower bounds on the Ricci curvature and the injectivity radius the absolute value of certain characteristic numbers of a Riemannian manifold, including all Pontryagin and Chern numbers, is bounded proportionally to the volume.
	The proof relies on Chern-Weil theory applied to a connection constructed from Euclidean connections on charts in which the metric tensor is harmonic and has bounded H\"older norm.
	
	We generalize this theorem to a Gromov-Hausdorff closed class of rough Riemannian manifolds defined in terms of Hölder regularity.
	Assuming an additional upper Ricci curvature bound, we show that also the Euler characteristic is bounded proportionally to the volume.
	Additionally, we remark on a volume comparison theorem for Betti numbers of manifolds with an additional upper bound on sectional curvature.
	It is a consequence of a result by Bowen.
\end{abstract}

\maketitle

\section{Introduction}\label{sec:intro}

\subsection{Characteristic numbers}\label{ssec:charNumb}
Characteristic numbers are important invariants of compact oriented differentiable manifolds of even dimension $\Dim$, they play a key role for instance in cobordism theory \cites{Stong15}[26.3]{Tu17}---among them the Euler characteristic and Pontryagin numbers.
The most basic example is the Euler characteristic of a surface, which by the Gauss-Bonnet Theorem can be expressed through the integral
\[ 
	\chi (M) = \frac{1}{2\pi} \int K \diff \varvol 
\]
where $ K $ denotes the Gaussian curvature.
In general, characteristic numbers are an invariant of principal bundles \cite[32]{Tu17}.
But we will restrict ourselves to the two cases where the principle bundle is the frame bundle of the tangent bundle or the complexified tangent bundle.
The two cases correspond to the structure groups $\linGr{GL}{\Dim}[\R]$ and $\linGr{GL}{\Dim}[\C]$.
We will explain the theory mostly for the $\linGr{GL}{\Dim}[\C]$ case but everything works in parallel by replacing $\C$ with $\R$.
In each case the invariants are given by $\linGr{GL}{\Dim}[\C]$-invariant ($\linGr{GL}{\Dim}[\R]$-invariant resp.) polynomials. In both cases specific generating systems can be explicitly stated.
The invariants coming from these generators are called Chern numbers (Pontryagin numbers resp.).
The Euler characteristic, that corresponds to the Euler class---or Pfaffian---is not of this form. 
But the corresponding bound follows under slightly stricter curvature assumptions, see \cref{thm:comparison_Euler} below.

We take the perspective of Chern-Weil theory \cite[23,25.8]{Tu17}. 
We denote by $(\C[ \LieAlg{gl}{\Dim}[\C] ])^{\linGr{GL}{\Dim}[\C]}$ the subring of $\linGr{GL}{\Dim}[\C]$-invariant polynomials from the ring $ 
	\C[ \LieAlg{gl}{\Dim}[\C] ] \simeq \C[\mathrm{M}_\Dim(\C)]
$, i.e.\ polynomials $\Pi$ on the matrix ring $ \mathrm{M}_\Dim(\C) $ that are invariant under the $ \linGr{GL}{\Dim}[\C] $-action $\Pi(\blank) \mapsto \Pi( A(\blank) A^{-1} ) $ (for every $A \in \linGr{GL}{\Dim}[\C] $).
The key in this approach is the \definiendum{Chern-Weil homomorphism} 
\[
	\phi\colon (\C[ \LieAlg{gl}{\Dim}[\C] ])^{\linGr{GL}{\Dim}[\C]} \to H^*(M, \C) 
\]
($\phi\colon (\R[ \LieAlg{gl}{\Dim}[\R] ])^{\linGr{GL}{\Dim}[\R]} \to H^*(M, \R) $ resp.)
valued in the de Rham cohomology ring with $\C$-coefficients $ H^*(M, \C) $.
This ring homomorphism is defined in terms of the curvature tensor $\Riem_\nabla$ of a connection $\nabla$, e.g.\ in case of a Riemannian manifold the Levi-Civita connection $\nabla$.
More precisely the cohomology class $\phi(\Pi) $ is represented by an evaluation of $\Pi$ in the entries of the curvature tensor $\Riem_\nabla$.
The degree of $\phi(\Pi)$ is actually twice the degree of $\Pi$.
It can be shown that this map does not depend on the choice of the connection \cite[23.5,25.8]{Tu17}.
Hence, in case we consider a connection induced by choice of a curvature tensor or by choice of a Riemannian metric via its curvature tensor, it is independent of the respective choice as well.
Each \definiendum{characteristic number} is determined by some invariant polynomial $\Pi \in (\C[ \LieAlg{gl}{\Dim}[\C] ])^{\linGr{GL}{\Dim}[\C]}$ of degree $\nicefrac{d}{2}$ and defined as the evaluation
\begin{equation}\label{eq:characteristicNumb}
	\Pi[M] \coloneqq \phi(\Pi)[M] 
\end{equation}
of the de Rham form $\phi(\Pi)$
on the fundamental class $[M]$ of $M$.
The cohomology class $\phi(\Pi)$ is itself a homeomorphism invariant called \definiendum{characteristic class}.

\subsection{Main theorems and history}
Define the \definiendum{Ricci curvature on the whole} of a $d$-dimensional Riemannian manifold $M$, $ \Ricci M $, as the set of all real numbers $ \Ricci(v,v) $ where $v \in \tang_x M $ with $ \|v\|_g=1$ and $ x \in M$.
In the same fashion define the \definiendum{sectional curvature on the whole} of $M$ as $ \Sec M \coloneqq \setBuilder{\Sec(v,w)}{v,w\in \tang_x M, \langle v,w \rangle_g > 0, x \in M} $.
The main theorem is the following comparison theorem, a theorem that relates one geometric quantity to another one:
\begin{restatable}{theorem}{comparison}\label{thm:comparison}
	Let $ \Pi \in (\C[ \LieAlg{gl}{\Dim}[\C] ])^{\linGr{GL}{\Dim}[\C]} $ ($ \Pi \in (\R[ \LieAlg{gl}{\Dim}[\R] ])^{\linGr{GL}{\Dim}[\R]} $ resp.) be an invariant polynomial for some even dimension $\Dim$, $ \iota> 0 $, and $ \underline{\kappa}\in\R $.
	There is a constant $ C = C(\Pi, \iota,\underline{\kappa}) $ such that
	\[ \left|\Pi[M]\right| \leq C \vol (M) \]
	for any closed oriented $d$-dimensional Riemannian manifold with 
	\begin{assumptions}
		\item\label{assump:curv} 
			Ricci curvature bound $ \Ricci M \geq \underline{\kappa} $ and
		\item\label{assump:injRad}
			injectivity radius bound $ \injRad M \geq \iota $.
	\end{assumptions}
\end{restatable}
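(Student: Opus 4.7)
The plan is to implement the strategy announced in the abstract: build a smooth connection on $TM$ whose Chern--Weil form is pointwise bounded by a constant depending only on $\underline{\kappa}$ and $\iota$, and then integrate. The starting point is a result of Anderson on harmonic coordinates: under \ref{assump:curv} and \ref{assump:injRad}, for fixed $\alpha\in(0,1)$ and $Q>1$ there is a uniform harmonic radius $r_0 = r_0(\underline{\kappa},\iota,\alpha,Q) > 0$, i.e.\ around every $x\in M$ there is a harmonic chart $\varphi_a \colon B_{r_0}(x) \to \R^\Dim$ on which the metric components $g_{ij}$ are $C^{1,\alpha}$-bounded by $Q$ and satisfy $Q^{-1}\delta_{ij}\le g_{ij}\le Q\delta_{ij}$. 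First I would extract a cover $\{U_a\}$ of $M$ by such harmonic charts of uniformly bounded multiplicity; a Vitali-type selection together with Bishop--Gromov volume comparison (which uses \ref{assump:curv}) yields such a cover, with multiplicity depending only on $\underline{\kappa}$, $\iota$, and $\Dim$.

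Next I would assemble a global smooth connection on $TM$ out of flat pieces. On each $U_a$ the chart $\varphi_a$ pulls back the standard flat connection on $\R^\Dim$ to a flat connection $\nabla^a$ on $TU_a$. Choose a partition of unity $\{\rho_a\}$ subordinate to $\{U_a\}$ with uniformly bounded derivatives and set $\nabla \coloneqq \sum_a \rho_a \nabla^a$, which is well defined because the difference of two connections is a tensor. Since $\phi(\Pi)$ is represented by $\Pi(\Riem_\nabla)$ independently of the choice of $\nabla$,
\[
    \Pi[M] \;=\; \int_M \Pi(\Riem_\nabla),
\]
so it suffices to bound $\Pi(\Riem_\nabla)$ pointwise.

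The crux—and the main obstacle—is the $L^\infty$ bound on $\Riem_\nabla$. In a fixed chart $U_b$ the connection form $\omega^a$ of $\nabla^a$ is read off from the Jacobian of the transition $\varphi_b \circ \varphi_a^{-1}$. Since the coordinates $x_b^i$ are $g$-harmonic and $g$ is only $C^{1,\alpha}$, Schauder elliptic regularity upgrades the transition to $C^{2,\alpha}$---no better---and this makes $\omega^a$ bounded in $L^\infty$. Crucially, each $\nabla^a$ is flat on $U_a$, so the identity $d\omega^a = -\omega^a\wedge\omega^a$ gives a pointwise bound on $d\omega^a$ without invoking a third derivative of the transition; this is the step that substitutes for the missing second derivative of $g$ that the Levi--Civita connection would require. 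Expanding $\Riem_\nabla = d\omega + \omega\wedge\omega$ for $\omega = \sum_a\rho_a\omega^a$, the bounded derivatives of the $\rho_a$ and the bounded multiplicity of the cover then yield a pointwise bound $\|\Riem_\nabla\| \leq K(\underline{\kappa},\iota)$. Since $\Pi$ is polynomial of degree $\Dim/2$, this gives $|\Pi(\Riem_\nabla)| \leq C(\Pi,\underline{\kappa},\iota)$ pointwise, and integration over $M$ closes the argument.
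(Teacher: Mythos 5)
Your proposal follows essentially the same route as the paper: harmonic charts of uniform size, a global connection obtained by gluing the flat Euclidean connections of these charts with a controlled partition of unity, connection-independence of the Chern--Weil form, and a curvature expression from which third derivatives of the transition maps are absent --- your identity $d\omega^a=-\omega^a\wedge\omega^a$ for the flat pieces is exactly the cancellation built into the paper's formula \cref{eq:curvature_connection} for the piecewise Euclidean connection, and your pointwise bound plus bounded multiplicity plays the role of the paper's chart count via \cref{lem:volBound}.

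One input is misstated, however, and it is precisely the point the paper is at pains to avoid: under only $\Ricci M\geq\underline{\kappa}$ and $\injRad M\geq\iota$ there is no uniform $C^{1,\alpha}$ harmonic radius. Anderson's $C^{1,\alpha}$ control of $g$ in harmonic coordinates requires a two-sided Ricci bound (cf.\ \cref{eq:absoluteRicciBd}); under the hypotheses of this theorem the Anderson--Cheeger result gives only $W^{1,p}$, hence $C^{0,\alpha}$, bounds (cf.\ \cref{eq:lowerRicciBd}). Fortunately your argument never uses more than this: harmonicity of the coordinates together with $C^{0,\alpha}$-bounded, uniformly elliptic coefficients already yields, by interior Schauder estimates on slightly shrunk domains, uniform $C^{2,\alpha}$ bounds on the transitions (your ``$g$ is $C^{1,\alpha}$, so Schauder gives $C^{2,\alpha}$ and no better'' is also off --- $C^{1,\alpha}$ coefficients would give $C^{3,\alpha}$ --- but this becomes moot once the input is corrected). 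Two smaller points should be made explicit: the connection form of $\nabla^a$ read in a $b$-chart involves second, not merely first, derivatives of the transition, which is why the $C^{2,\alpha}$ transition estimate is exactly the needed currency; and the uniform derivative bounds on the partition of unity are not free but come from the same transition estimates via an explicit bump-function construction, as in the paper's proof. Finally, note that in the smooth setting of this theorem the appeal to connection-independence is the standard smooth Chern--Weil statement, so no analogue of the mollification argument of Proposition~\ref{prop:connIndep} is required here; with the regularity input corrected to $C^{0,\alpha}$, your argument is the paper's proof.
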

We will prove this theorem directly, though it is also a direct consequence of the slightly more general \cref{thm:comparison_Hold} below, which we will proved as a generalization of \cref{thm:comparison}.
The proof is actually exactly the same with the exception of one place where more work is needed (Proposition~\ref{prop:connIndep}).

There is a long history of such comparison theorems. Among them some celebrated results of Gromov: He started off with a comparison theorem for Betti numbers $b_0, \ldots, b_\Dim $ over any field of a compact connected Riemannian manifold $(M, g)$ of arbitrary dimension $\Dim$ and with sectional curvature $ \Sec M \subset [0,\infty) $ \cites{Gromov81_Betti}[12.5]{Petersen16},
that provides a bound
\[
	\sum_{ \xN = 0}^\Dim b_\xN \leq C(\Dim)
\text{;}
\]
note that we will always indicate dependents of constants, like $ C(\Dim) $, on parameters, like $d$, by parentheses.
 A few years later a comparison theorem for negatively curved real-analytic manifolds appeared \cite[Theorem~2]{BallmannGromovSchroeder85}: Let $M$ have finite volume, sectional curvature $ \Sec(M) \subset [-1,0] $, and a universal cover $ \tilde{M} $ that admits an isometric splitting of the form $ \tilde{M} \simeq N \times \R $. Then
\[
	\sum_{ \xN = 0}^\Dim b_\xN \leq C(\Dim) \cdot \vol(M)
\text{.}
\]

Around the same time \textcite{CheegerGromov85} showed that the $\eta$-invariant of a compact $(4l-1)$-dimensional manifold $M$ is bounded by
\[
	\left|\eta(M)\right| \leq C(l) \cdot \vol(M)
\]
provided that $\Sec (M) \subset [-1,1]$ and that some profinite or normal cover of $M$ has injectivity radius at least 1.
In general the $\eta$-invariant can be defined in terms of an elliptic self-adjoint operators \cite{Muller94,Bismut98}.
It is possible to relate this invariant to characteristic numbers when assuming that $M$ is the boundary of an oriented manifold $N$:
The signature $\sigma(N)$ of $N$ can be expressed as  $ 
	\sigma(N) = L_l[N] + \eta(M) + \mathrm{I\!I}_\sigma(M) 
$ where $\mathrm{I\!I}_\sigma(M)$ is an expression in the second fundamental form of $M$ and $L_l[N]$ is a characteristic number given by the $l$-th $L$-polynomial \cite[B.5]{Tu17}.

In view of a comparison theorem for Betti numbers, another statement closer related to \cref{thm:comparison} follows from a result by \textcite{Bowen15} and will be proved at the end of \cref{sec:HolderNorm}:

\begin{restatable}{theorem}{comparisonBetti}\label{thm:comparison_Betti}
	Let $\Dim = 1, 2,\ldots$, and $\iota > 0$, 
	$-\infty < \underline{\kappa} \leq \overline{\kappa} < \infty $.
	For $i = 0, \ldots, d$ there is a constant $ 
		C_i = C_i (d, \iota, \underline{\kappa}, \overline{\kappa} ) 
	$ such that 
	\[
		| b_i(M) | \leq C_i \vol (M)
	\]	
	for any $d$-dimensional closed Riemannian manifold $M$ with $\injRad(M) > \iota $, $ \Ricci(M) \geq \underline{\kappa} $, and a sectional curvature bound $ \Sec(M) \leq \overline{\kappa} $.
\end{restatable}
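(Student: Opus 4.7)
The plan is to combine precompactness in a suitable pointed topology with a Bowen-type continuity result for normalized Betti numbers, argued by contradiction. If no uniform constant $C_i$ existed, a sequence $M_n$ of manifolds satisfying our hypotheses with $b_i(M_n)/\vol(M_n)\to\infty$ would admit a Benjamini--Schramm subsequential limit---a unimodular random rooted $d$-manifold whose normalized Betti number density would necessarily be infinite, contradicting the bounded local geometry of the limit.

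Concretely, I would first note that the upper sectional curvature bound $\Sec(M)\leq\overline{\kappa}$ contracts to an upper Ricci bound $\Ricci(M)\leq (d-1)\overline{\kappa}$. Together with the lower Ricci bound $\Ricci(M)\geq\underline{\kappa}$ and the lower injectivity radius bound $\injRad(M)\geq\iota$, we are in the hypotheses of Anderson's $C^{1,\alpha}$ compactness theorem (the same setup that underlies \cref{thm:comparison_Hold}). The class of pointed $d$-manifolds in question is therefore precompact in the pointed $C^{1,\alpha}$ topology, and \emph{a fortiori} in the pointed Gromov--Hausdorff and Benjamini--Schramm topologies. Any sequence of associated volume-normalized uniform rooting measures then admits a subsequential limit supported on pointed manifolds satisfying the same curvature and injectivity radius bounds.

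To extract the uniform estimate, I would invoke Bowen's result \cite{Bowen15}: interpreted in our setting, it provides the appropriate (upper semi-)continuity of normalized Betti numbers along Benjamini--Schramm convergent sequences of manifolds of uniformly bounded local geometry, with the limiting value given by an $L^2$-Betti number density of the limit unimodular random manifold. This density is finite and uniformly bounded in terms of $d,\iota,\underline{\kappa},\overline{\kappa}$---for instance via heat-kernel estimates on the universal cover, which inherits the same geometric bounds via deck-transformation invariance. The resulting contradiction with divergence of $b_i(M_n)/\vol(M_n)$ yields the desired constant $C_i$.

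The main obstacle, as I see it, is pinning down the precise form of Bowen's statement that applies in our Riemannian setting and verifying that all dependencies reduce to the stated parameters. In particular, one has to check that the $L^2$-Betti number density of the Benjamini--Schramm limit depends only on $d,\iota,\underline{\kappa},\overline{\kappa}$, a plausible but non-trivial point that likely requires a uniform heat-kernel bound on any universal cover whose geometry falls within our compact class. Once this is secured, non-negativity of the Betti numbers makes the absolute value cosmetic, and the compactness-by-contradiction argument delivers the asserted estimate $|b_i(M)|\leq C_i\vol(M)$.
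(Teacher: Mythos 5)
Your proposal is correct and follows essentially the same route as the paper: Bowen's Benjamini--Schramm continuity of $b_i(M)/\vol(M)$ combined with precompactness of the class coming from Anderson-type harmonic-coordinate regularity, the paper phrasing it as ``a continuous extension of $\phi_i\colon M\mapsto b_i(M)/\vol(M)$ to a compact set of probability measures is bounded'' rather than as your subsequence-extraction contradiction. The step you flag as the main obstacle---uniformly bounding the limiting $L^2$-Betti number density via heat-kernel estimates---is actually unnecessary: Bowen's theorem already gives convergence of the normalized Betti numbers to a finite value along any Benjamini--Schramm convergent subsequence within the class, which suffices for the contradiction (and is all the paper uses).
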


Further theorems in this context were recently proved by \textcite{BaderGelanderSauer20} who studied the integer homology of a $d$-dimensional complete Riemannian manifolds $M$ with $ \Sec(M) \subset [-1,0) $.
They obtained a bound on the rank by
\[
	\rank H_k(M; \Z) \leq C(d) \cdot \vol(M)
\]
for every degree $k$. Under the additional assumption $ d \neq 3 $ they also obtained the parallel bound
\[
	\operatorname{torsion} H_k(M; \Z) \leq C(d) \cdot \vol(M)
\]
on the torsion part of the homology for every degree $k$.

\subsection{The proof and outline}
We will start by giving an overview of regularity theory of Riemannian manifolds (\cref{sec:HolderNorm}).
With regard to the proof of \cref{thm:comparison} one should at first note that when strengthening \cref{assump:curv} to both sided Ricci curvature bound (and retaining \cref{assump:injRad}) the claim follows easily from a regularity result on the Riemannian metric tensor obtained by \textcite{Anderson90}.
This result provides conditions to a manifold to belong to a class $\mathcal{M}^\Dim(\Sobolev{2}[p]\leq_r^{\textnormal{harm}})$ of Sobolev regular manifolds, which will be introduced by \cref{eq:normBd_Sobolev}. 

In \cref{sec:EulerChar} we will use this to prove the announced variant for the Euler characteristic, basically by applying H\"older's inequality.

\begin{restatable}{theorem}{comparisonEuler}\label{thm:comparison_Euler}
	Let $ \Dim $ be some even dimension $\Dim$ and $ Q,r> 0 $, $p \in (d,\infty)$.
	There is a constant $ C $ such that
	\[ 
		\left|\chi (M)\right| \leq C \vol (M) 
	\]
	where
	\begin{claims}
	\item\label{claim:thm:comparison_Euler_geom}
		$C = C(\iota, \underline{\kappa}, \overline{\kappa} )$ and 
		$M$ ranges over closed oriented $\Dim$-dimensional Riemannian manifolds with $\injRad(M) \geq \iota$ and $ \Ricci M \subset [\underline{\kappa}, \overline{\kappa}] $, or
	\item\label{claim:thm:comparison_Euler_Sobolev}
		$C = C(r, p, Q)$ and 
		$M$ ranges over closed oriented $\Dim$-dimensional Riemannian manifolds in the class $\mathcal{M}^\Dim(\Sobolev{2}[p]\leq_r^{\textnormal{harm}} Q )$.
	\end{claims}
\end{restatable}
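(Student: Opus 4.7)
The plan is to apply the Chern--Gauss--Bonnet representation of $\chi(M)$ as the integral of the Pfaffian of the curvature $2$-form $\Omega$. Since the Pfaffian is a homogeneous polynomial of degree $\Dim/2$ in the entries of $\Omega$, one has the pointwise bound $|\mathrm{Pf}(\Omega)| \leq C_1(\Dim)\, |R|^{\Dim/2}$, where $R$ denotes the Riemann curvature tensor, and hence
\[
	|\chi(M)| \leq C_2(\Dim) \int_M |R|^{\Dim/2} \diff \varvol.
\]
The task is therefore to bound the right-hand side linearly in $\vol(M)$.

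For claim \ref{claim:thm:comparison_Euler_geom} I would first invoke Anderson's regularity theorem, already sketched in \cref{sec:HolderNorm}: the two-sided Ricci bound together with the injectivity-radius bound places $M$ into the class $\mathcal{M}^\Dim(\Sobolev{2}[p] \leq_r^{\textnormal{harm}} Q)$ for constants $r$ and $Q$ depending only on $\iota$, $\underline{\kappa}$, $\overline{\kappa}$, and $p$. This reduces claim \ref{claim:thm:comparison_Euler_geom} to claim \ref{claim:thm:comparison_Euler_Sobolev}.

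For claim \ref{claim:thm:comparison_Euler_Sobolev} I would cover $M$ by harmonic charts $U_1,\ldots,U_N$ of radius $r/2$ with bounded multiplicity (Vitali-type selection). The uniform two-sided comparison of $g$ with the Euclidean metric on each harmonic chart provides a uniform lower bound on $\vol(U_\alpha)$, and consequently $N \leq C_3(r,Q,\Dim)\, \vol(M)$. On each $U_\alpha$ the coefficients $g_{ij}$ lie in $\Sobolev{2}[p]$ with bounded norm, so the coordinate curvature components---being a rational expression in $g_{ij}$, $g^{ij}$, $\partial g_{ij}$, and $\partial^2 g_{ij}$---lie in $L^p(U_\alpha)$ with a bound depending only on $r,p,Q,\Dim$. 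Because $p > \Dim \geq \Dim/2$, H\"older's inequality now gives
\[
	\int_{U_\alpha} |R|^{\Dim/2} \diff \varvol \leq \vol(U_\alpha)^{1 - \Dim/(2p)} \, \|R\|_{L^p(U_\alpha)}^{\Dim/2} \leq C_4(r,p,Q,\Dim),
\]
and summing over $\alpha$ yields the desired linear estimate.

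The main technical obstacle will be the transfer of the $\Sobolev{2}[p]$-bound on the chart-wise components of $g$ to an $L^p$-bound on the intrinsic pointwise norm $|R|$ of the Riemann tensor. This is a chain-rule and Sobolev-multiplication estimate that uses the two-sided bounds on $g_{ij}$ in harmonic coordinates, but it has to be done with care so that the final constants depend only on the admissible parameters $r$, $p$, $Q$, $\Dim$. A secondary point is justifying the Chern--Gauss--Bonnet formula itself for a merely $\Sobolev{2}[p]$-regular metric, which should follow by a density argument: approximate $g$ chart-wise by smooth metrics in $\Sobolev{2}[p]$ and use that, for $p > \Dim$, both sides of the formula vary continuously with $g$.
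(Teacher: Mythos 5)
Your proposal is correct and follows essentially the same route as the paper: reduce claim \ref{claim:thm:comparison_Euler_geom} to \ref{claim:thm:comparison_Euler_Sobolev} via the Anderson--Cheeger $\Sobolev{2}[p]$-regularity, cover $M$ by controlled harmonic charts whose number is bounded by $C\,\vol(M)$ through a uniform lower volume bound, and bound the Gauss--Bonnet integrand chart-wise via H\"older's inequality using the $\Lebesgue{p}$-control of the second derivatives of $g$. The ``technical obstacle'' you flag is handled in the paper exactly as you anticipate: for $p>\Dim$ the Sobolev embedding gives $\Hoeld{1}[\alpha]$-bounds on $g_{kl}$ (hence $\Hoeld{0}$-bounds on $g^{kl}$ and $g_{kl,\mu}$ via \cref{eq:reciprocalEstimate}), so the curvature is linear in the $\Lebesgue{p}$-bounded second derivatives with uniformly bounded coefficients, and the Pfaffian is a degree-$\nicefrac{\Dim}{2}$ polynomial in these.
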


But under the weaker \cref{assump:curv,assump:injRad} there is only a bound on the $\Sobolev{1}[p]$-norm and consequently, as we will explain in \cref{ssec:chartNorms}, on the $\Hoeld{}[\alpha]$-norm.
Having bounded $\Hoeld{}[\alpha]$-norm can be roughly described as that there is an atlas of $M$ consisting of charts of controlled size in which the metric tensor is $\Hoeld{}[\alpha]$-bounded. The trick is then to define a connection, we call \textit{piecewise Euclidean connection}, from which we can define a curvature tensor (\cref{sec:piecewiseEuclConn}). As characteristic classes do not depend on the connection, we can use this curvature tensor instead and conclude the proof (\cref{sec:comparisonThm}).

\Cref{sec:comparison_C1alpha} exploits the feature of this piecewise Euclidean connection that it can be defined for an atlas of $\Hoeld{2}$ regularity. Actually \cref{thm:comparison} can be proved more general statement on a class $\mathcal{M}^\Dim(\Hoeld{m}[\alpha]\leq_r^{\textnormal{harm}} Q)$ of Riemannian manifolds with harmonic $\Hoeld{}[\alpha]$-bounded metric tensor (see \cref{ssec:functionNorms}). Such metrics have been intensively studied \cites{Taylor06}{LytchakYaman06}{SamannSteinbauer18}{KunzingerSteinbauerStojkovic13}[11]{Petersen16}{JulinLiimatainenSalo17}{KordassL20} and arise for instance as Gromov-Hausdorff limits of smooth Riemannian metrics which have been studied as well to great extent \cites{Taylor07}{CheegerColding97}{Sormani12}.
These limits constitute important examples of spaces satisfying a generalized curvature condition \cite[Ch.~29-30]{Villani08}. 
The following result continues this analytical industry.
It is also potentially beneficial from a practical point of view since given a manifold one does not have to bother about differentiability of transition maps at all as will be explained in the discussion after \cref{eq:transition_estimate}.

\begin{restatable}{theorem}{comparisonHold}\label{thm:comparison_Hold}
	Let $ \Pi $ be an invariant polynomial on $ \operatorname{M}_\Dim(\C) $ ($ \operatorname{M}_\Dim(\R) $ resp.) for some even dimension $\Dim$ and $ Q,r> 0 $.
	There is a constant $ C = C(\Pi, r, Q) $ such that
	\[ 
		\left|\Pi[M]\right| \leq C \vol (M) 
	\]
	for any closed oriented $d$-dimensional $ M \in \mathcal{M}^\Dim(\Hoeld{}[\alpha]\leq_r^{\textnormal{harm}} Q) $.
\end{restatable}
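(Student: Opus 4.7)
The plan is to reduce $|\Pi[M]|$ to a pointwise bound on a $d$-form representing $\phi(\Pi)$, using a connection whose curvature is controlled purely by the Hölder data of the metric. The Levi-Civita connection of a $\Hoeld{}[\alpha]$-metric has only $\Hoeld{}[\alpha-1]$-regular coefficients so its curvature is not defined classically. To sidestep this, I would build a \emph{piecewise Euclidean connection} $\nabla$ by patching together the flat Euclidean connections on the harmonic charts with a partition of unity, exactly as the introduction announces. Chern--Weil invariance, once established in this rough setting, then lets us compute $\phi(\Pi)$ with $\nabla$ instead of the Levi-Civita connection.

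First I would unpack the defining property of $\mathcal{M}^\Dim(\Hoeld{}[\alpha]\leq_r^{\textnormal{harm}} Q)$ to get a harmonic atlas $\{(U_i,\varphi_i)\}$ with charts of controlled size $r$ on which $g_{ij}$ and $g^{ij}$ have $\Hoeld{}[\alpha]$-norm bounded by $Q$. A Vitali-type argument produces a refinement with uniformly bounded overlap multiplicity $N=N(\Dim,r)$ and a subordinate partition of unity $\{\rho_i\}$ whose derivatives are bounded in terms of $r$. On each $U_i$ let $\nabla^{(i)}$ be the Euclidean connection of the harmonic coordinates and set $\nabla\coloneqq\sum_i \rho_i\,\nabla^{(i)}$, which amounts to a convex combination of Christoffel symbols after pullback by the transition maps $\varphi_j\circ\varphi_i^{-1}$. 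By elliptic regularity for the harmonic-coordinate PDE these transitions are $\Hoeld{}[1+\alpha]$, so $\nabla$ itself has $\Hoeld{}[\alpha]$ coefficients and its curvature $\Riem_\nabla$ is a well-defined $L^\infty$ tensor whose components are pointwise bounded by a constant $C_1(r,Q)$ (involving derivatives of $\rho_i$ and of the transition matrices, but no derivatives of $g$).

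With this in hand the proof finishes quickly. Since $\Pi$ is an invariant polynomial of degree $\nicefrac{d}{2}$ in the entries of $\Riem_\nabla$, the representing $d$-form $\phi(\Pi)$ satisfies $|\phi(\Pi)(x)|_{g}\leq C(\Pi,r,Q)$ pointwise, where the $g$-norm on $d$-forms is uniformly comparable to the flat one thanks to the two-sided bound on $g$ in the harmonic charts. Integrating,
\[
	\left|\Pi[M]\right|
	=\left|\int_M \phi(\Pi)\right|
	\leq C(\Pi,r,Q)\,\vol(M),
\]
which is the claimed inequality.

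The main obstacle, and the reason the passage from \cref{thm:comparison} to \cref{thm:comparison_Hold} requires extra work, is justifying that $\phi(\Pi)$ computed with the piecewise Euclidean $\nabla$ really represents the topological characteristic class, i.e.\ the connection-independence \emph{in this low-regularity setting}. This is the content of the referenced \cref{prop:connIndep}. My approach would be an approximation argument: smooth the metric by convolution in each harmonic chart to obtain metrics $g_\varepsilon$ whose harmonic-chart $\Hoeld{}[\alpha]$-norms remain bounded by a constant multiple of $Q$, form the corresponding piecewise Euclidean connections $\nabla_\varepsilon$, and show that $\Riem_{\nabla_\varepsilon}\to\Riem_\nabla$ in $L^\infty$, so that the smooth Chern--Weil identity $\phi_{\nabla_\varepsilon}(\Pi)=\phi_{\nabla_\varepsilon^{\mathrm{LC}}}(\Pi)+d\tau_\varepsilon$ (with the standard transgression form $\tau_\varepsilon$) passes to the limit. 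Once this identification is in place, the pointwise bound above yields the theorem without further change from the smooth case.
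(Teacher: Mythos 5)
Your overall strategy is the same as the paper's (piecewise Euclidean connection, Chern--Weil invariance via smoothing, then a pointwise bound integrated over $M$), but two of the steps you treat as routine are exactly where the real work lies, and as stated they have gaps. First, the regularity bookkeeping: by the Schauder estimate \cref{eq:transition_estimate} the transition functions are $\Hoeld{2}[\alpha]$ (not $\Hoeld{1}[\alpha]$ as you write -- with only $\Hoeld{1}[\alpha]$ transitions even the Christoffel symbols of $\nabla$, which contain second derivatives $y^i_{l,\mu\nu}$, would not exist). More seriously, with $\Hoeld{}[\alpha]$ connection coefficients the curvature is \emph{not} ``a well-defined $L^\infty$ tensor'' by fiat: the standard formula $R=\partial\Gamma-\partial\Gamma+\Gamma\Gamma$ requires third derivatives of the transition maps, which do not exist. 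The paper's key device is to expand $\partial(\partUnity_i\,\prescript{i}{}{\Gamma})$ and observe that the third-derivative terms cancel under the antisymmetrization $[\mu\nu]$, so that \cref{eq:curvature_connection} can be taken as the \emph{definition} of the curvature; one must then verify by hand that this expression transforms as a $(3,1)$-tensor (Lemma~\ref{lem:coordinateIndep}, proved in the appendix precisely because the textbook argument also uses third derivatives). Your proposal asserts the conclusion of these steps without supplying them.

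Second, your approximation argument for connection independence mollifies the wrong object. The piecewise Euclidean connection depends only on the charts and the partition of unity, not on the metric; convolving $g$ chart by chart produces neither a globally defined smooth metric nor a smooth connection, so ``the corresponding piecewise Euclidean connections $\nabla_\varepsilon$'' are not defined by your construction, and comparing with a Levi-Civita connection of $g_\varepsilon$ via a transgression form has no meaning globally. The paper's \cref{prop:connIndep} instead mollifies the chart maps $\chart_i^{-1}$ and the partition of unity (localized by an auxiliary \emph{smooth} atlas and cut-offs), proves $\Hoeld{2}$-convergence of the mollified charts and of their inverses (Lemma~\ref{lem:inverseConv}), deduces $\Hoeld{0}$-convergence of the curvatures \cref{eq:curvatureSmoothing}, and only then invokes smooth Chern--Weil theory and dominated convergence. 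Your final integration step (pointwise bound on $\phi(\Pi)$ in harmonic charts, using bounded overlap, then integrating against $\dvol_g$) is fine and essentially equivalent to the paper's counting of charts via the volume lower bound of Lemma~\ref{lem:volBound}, but it only becomes available once the two gaps above are closed.
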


\section{H\"older norm and rough atlases}
\label{sec:HolderNorm}

\subsection{H\"older norms on function spaces}
\label{ssec:functionNorms}
We will use H\"older spaces of functions $ f\colon \Omega^\Dim \to \R^\DimVar $ on an open domain $ \Omega \subset \R^\Dim $ (we will normally work in the case that $\Omega$ is bounded and has a Lipschitz boundary).
For $m = 0, 1, \ldots$ define
\begin{equation}\label{eq:nabla}
    \nabla^m f\colon \Omega \to \R^{N\cdot \Dim^m} 
\end{equation}
to be the function of all derivatives of order $m$.
\begin{subequations}
Further let
\begin{equation}\label{eq:maxNorm}
        |(x_1, \ldots, x_\Dim)|
    \coloneqq
        \max\{|x_1|, \ldots, |x_\Dim|\}
\end{equation}
for $ x = (x_1, \ldots, x_\Dim) \in \R^\Dim$ (which is in contrast to the Euclidean norm $  \dist{0}{x} $).
Recall that the H\"older norm, for $\alpha \in [0,1]$, $m = 0, 1, \ldots$, and a domain $\Omega\subset\R^n$, is given by
\begin{align}
\label{eq:Holder_norm}
        \|f\|_{\Hoeld{m}[\alpha]}
    &\coloneqq
        \sum_{k=0}^m \|\nabla^k f\|_{\Hoeld{0}} + \|\nabla^k f\|_\alpha 
\intertext{where 
$ \|f\|_{\Hoeld{0}} \coloneqq \sup\limits_{ \substack{ x, y \in \Omega \\ x \neq y } } |f(x) - f(y)|  $ and 
$\|f\|_\alpha  = 0 $ in case $\alpha = 0 $ and otherwise }
\label{eq:Holder_seminorm}
        \|f\|_\alpha 
    &\coloneqq
        \sup_{ \substack{ x, y \in \Omega \\ x \neq y } } 
            \frac{| f(x) - f(y) |}{|x-y|^\alpha}
\text{.}
\intertext{Moreover extend these norms to all functions $\Omega \to \R^n $ by assigning}
\label{eq:seminorm_infty}
		\|\nabla^k f\|_{\Hoeld{0}}, \|f\|_\alpha 
	&\coloneqq
	    \infty
\end{align}
\end{subequations}
if $f$ does not have $k$-th derivatives.
Set $ \Hoeld{m} \coloneqq \Hoeld{m}[0] $.
Denote by $
    \Hoeld{m}[\alpha](\bar{\Omega}) = \Hoeld{m}[\alpha](\bar{\Omega}, \R)
$ the normed vector spaces that comprises the real-valued functions $\Omega \to \R^N $ whose derivatives of order not greater than $m$ have a continuation to the closure $ \bar{\Omega} $ and is endowed with the norm $ \|\blank\|_{\Hoeld{m}[\alpha]} $. Note that the continuation conditions become empty as soon as $ \alpha > 0 $.
From $\|\blank\|_\alpha \leq \|\blank\|_1 $ and the mean value theorem we get the elementary estimate
\begin{equation}\label{eq:Holder_elementaryEstimate}
        \|f\|_{\Hoeld{m}[\alpha]}
    \leq
        \|f\|_{\Hoeld{m+1}}.    
\end{equation}


\begin{subequations}

We formulate a number of convenient estimates for the H\"older norm.
Let $\Omega \subset \R^\Dim $, $\mathcal{O} \subset \R^N $ be bounded open domains with Lipschitz boundary, $ \alpha \in[0,1] $, $ m \in \{0,1,\ldots\}$, and $ c>0 $ .
The following estimates hold \cite[16.5-16.7]{Dacorogna11}: 
For $ f,g \in \Hoeld{m}[\alpha](\Omega, \R) $ we have the product estimate
\begin{equation}\label{eq:prodEstimate}
		\|fg\|_{\Hoeld{m}[\alpha]}
	\leq
		C	\|f\|_{\Hoeld{m}[\alpha]} \|g\|_{\Hoeld{m}[\alpha]} 
\end{equation}
with $ C = C(\Omega, m) $.
For a matrix valued function $ A\in \Hoeld{m}[\alpha](\Omega, \R^{\Dim\cdot \Dim}) $ such that
	\[ 
		\left\|\frac{1}{\det A}\right\|_{\Hoeld{0}} \leq c
		\quad\text{and}\quad
		\|A\|_{\Hoeld{0}} \leq c
	\]
we have the bound on the reciprocal
\begin{equation}\label{eq:reciprocalEstimate}
		\|A^{-1}\|_{\Hoeld{m}[\alpha]}
	\leq
		C \|A\|_{\Hoeld{m}[\alpha]}
\end{equation}
for a constant $ C = C(c, \Omega, m) > 0 $.

For $ f \in \Hoeld{m}[\alpha](\Omega, \R^\Dim)$ and $ g \in \Hoeld{m}[\alpha](\mathcal{O}, \R) $ with $ f(\Omega) \subset \mathcal{O}$ we have the composition estimate
\begin{equation}\label{eq:compEstimate}
		\|g\circ f\|_{\Hoeld{m}[\alpha]}
	\leq
		C\left(
				\|g\|_{\Hoeld{m}[\alpha]} \|f\|_{\Hoeld{m}[\alpha]}
			+
				\|g\|_{\Hoeld{0}}
		\right)
\end{equation}
for a constant $ C = C(m, \Omega, \mathcal{O}) $.
Moreover, if $\alpha > 0$ and $m\geq 1$, we have for any two functions $u,v \in \Hoeld{m}(\Omega, \R^d) $ 
with $ c \geq \|u\|_{\Hoeld{1}} + \|v\|_{\Hoeld{1}} $
\begin{multline}\label{eq:diffCompEstimate}
		\| g \circ u - g\circ v \|_{\Hoeld{m}}
	\leq
		C 
		\|g\|_{\Hoeld{m}[\alpha]}
		\left( 1 + \|u\|_{\Hoeld{m}[\alpha]} + \|v\|_{\Hoeld{m}[\alpha]} \right)
		 \\
		\cdot\left(\|u-v\|_{\Hoeld{0}}^\alpha + \|u-v\|_{\Hoeld{m}[\alpha]}\right).
\end{multline}
where $ C = C(c, m,\Omega, \mathcal{O} ) $.
Finally, a right inverse $ f \in \Hoeld{m}[\alpha](\Omega, \R^\Dim )$ of a function $ g \in \Hoeld{m}[\alpha](\mathcal{O}, \R^d ) $, that is to say $ f(\Omega) \subset \mathcal{O}$ and $ g\circ f = \mathit{id} $,
is bounded by
\begin{equation}\label{eq:inverseEstimate}
	\|f\|_{\Hoeld{m}[\alpha]} \leq C \|g\|_{\Hoeld{m}[\alpha]}
\end{equation}
for a constant $ 
	C = C(m,\Omega,\mathcal{O}, \|g\|_{\Hoeld{1}}, \|f\|_{\Hoeld{1}} ) 
$.
\end{subequations}

\subsection{Chart norms}
		\label{ssec:chartNorms}

H\"older classes of Riemannian metrics allow to formulate celebrated regularity results in a concise fashion.
Let $\Openball{x}{r}$ denote the open ball of radius $r$ around $x$ in the metric space to which $x$ belongs, e.g.\ $\Openball{x}{r}$ denotes the Euclidean ball around the origin $0$ in the Euclidean space $\R^\Dim$.
In contrast, we will denote a closed ball by $ \Closedball{x}{r} $.
Let  $(M, g,\basePt)$ be a pointed $n$-dimensional Riemannian manifold.
We introduce norms on charts, given by pointed maps
\begin{equation}
	\chart \colon (\Openball{0}{r}, 0) \to (M,\basePt),
\end{equation}
i.e.\ $\chart(0) = \basePt$.
We will mainly use and adapt definitions from \cite[11.3.1-11.3.5]{Petersen16}.

\begin{definition}\label{def:Holder_chartNorm}
    For a chart $\chart \colon (\Openball{0}{r}, 0) \to (M, g,\basePt)$ of $M$ we define $\|\chart\|_{\Hoeld{m}[\alpha], r}^{\textnormal{harm}}$,
    the \definiendum{harmonic chart norm of $\chart $ on the scale of $r$}
    as the minimal (note that we will state only finitely many conditions) quantity $Q \geq 0$ for which the following conditions 
    are fulfilled
    \begin{subequations}
    \begin{enumerate}
        \item
            for the differentials we have the bounds $ |D\chart| \leq e^Q $ on $\Openball{0}{r}$ and $ |D\chart^{-1}| \leq e^Q $ on $ \chart(\Openball{0}{r}) $.
            Equivalently, this condition can be expressed in coordinates on $\chart$ by
            \begin{equation}\label{eq:normBd}
                e^{-2Q} \delta_{kl} v^k v^l \leq g_{kl}  v^k v^l \leq e^{2Q} \delta_{kl} v^k v^l
            \end{equation}
            for every vector $v \in \R^\Dim$.
        \item
            regarding the semi-norm from \cref{eq:Holder_seminorm,eq:seminorm_infty} we have
            \begin{equation}
                    \label{eq:normBd_Holder}
                r^{k+\alpha} \|\nabla^k g_{\dbBlank}\|_\alpha  \leq Q
            \end{equation}
            for any $ k = 0, 1, \ldots, m$, where $ g_{\dbBlank} \coloneqq \chart^* g $.
        \item
            the chart $\chart$ is harmonic, meaning that each coordinate function $x_1, \ldots, x_\Dim$ is harmonic with respect to $g_\dbBlank$, i.e.\ the Laplace-Beltrami operator vanishes
            \begin{equation}
                    \label{eq:normBd_harmonic}
                \laplacian_{g_\dbBlank} x_1 = \ldots = \laplacian_{g_\dbBlank} x_\Dim = 0
             \text{,}
            \end{equation}
            where the Laplace operator $
            	\laplacian_{g_\dbBlank}\colon f \mapsto (\sqrt{\det g_\dbBlank} \cdot g^{kl} \cdot \partial_l f) \partial_k
            $ is understood as valued in distributions and  $ g^\dbBlank $ is the inverse of $g_\dbBlank$,
            for details cf.\ \cite[3.9]{Taylor00}.
    \end{enumerate}
    \end{subequations}
\end{definition}

\begin{subequations}
We can directly extend this definition by
\begin{align*}
        \|(M,g,\basePt)\|_{\Hoeld{m}[\alpha], r}^{\textnormal{harm}}
    &=
        \inf_{\chart\colon (\Openball{0}{r},0) \to (M,\basePt) } \|\chart\|_{\Hoeld{m}[\alpha]}^{\textnormal{harm}},
\\
        \|(M,g)\|_{\Hoeld{m}[\alpha], r}^{\textnormal{harm}}
    &=
        \sup_{\basePt \in M } \|(M, g,\basePt)\|_{\Hoeld{m}[\alpha], r}^{\textnormal{harm}}
\text{.}
\end{align*}
\end{subequations}
Note that $
    \|(M,g,\basePt)\|_{\Hoeld{m}[\alpha], r}^{\textnormal{harm}}
$ is realized by charts through an application of the Arzel\`a–Ascoli theorem.
Finally, let 
\begin{equation*}
    \mathcal{M}^\Dim(\Hoeld{m}[\alpha]\leq_r^{\textnormal{harm}} Q)
\end{equation*}
denote the space of isomorphism class of $n$-dimensional pointed Riemannian manifolds $ (M, g,\basePt) $ with $\|(M, g)\|_{\Hoeld{m}[\alpha], r}^{\textnormal{harm}} \leq Q$.
This space is endowed with the Gromov-Hausdorff topology.
Note the elementary estimate \cite[\citeProposition 11.3.2 (4)]{Petersen16}
\begin{equation}
        \label{eq:distanceComparison}
        \cref{eq:normBd}
    \implies
            e^{-Q}\min\{ \dist{x}{y}, 2r - \dist{0}{x} \}
        \leq
            \dist{\chart(x)}{\chart(y)}_g
        \leq
            e^Q \dist{x}{y}
\end{equation}
for all $ x, y \in \Openball{0}{r}$ and $|\dbBlank|$ the Euclidean norm.

Having introduced a space with a global bound on the metric tensor in local coordinates, one may feel inclined to ask why we did not assume any regularity assumption on changes of coordinates?
The answer is found in Schauder estimates, standard estimates on the regularity of solutions of elliptic PDEs.
The crucial fact can be stated as follows \cite[Problem~6.1~(a)]{GilbargTrudinger15}:
On a bounded open set $\Omega$
let $u\colon \Omega \to \R $ be a $ \Hoeld{m+2}[\alpha] $-solution ($ m \geq 0 $) of $ 
    (a^{ij}(x)\partial_i\partial_j + b^i(x) \partial_i + c(x) )u = f 
$ (summation convention) and assume that the coefficients of $L$ satisfy $ a^{ij}\xi_i\xi_j \geq \lambda |\xi|^2 $ and $ 
        \| \nabla^m a \|_\alpha, \| \nabla^m b \|_\alpha, \| \nabla^m c \|_\alpha
    \leq
        \Lambda
$. 
If $\Omega' \subset \Omega$ with $\overline{\Omega'} \subsetneqq \Omega$, then
\[
        \| u\|_{\Hoeld{m+2}[\alpha]}
    \leq
        C( \|u\|_{\Hoeld{0}} + \| f \|_{\Hoeld{m}[\alpha]} )
\]
on $\Omega'$ with $ C = C(\Dim, m, \alpha, \lambda, \Lambda, \dist{\Omega'}{\setBd \Omega}_{\mathrm{H}} ) $
where $ \Dist[H](\Omega', \setBd \Omega) $ denotes the Hausdorff distance.

If we apply this statement to a transition function $\chart_i^{-1}\circ \chart_j$ for two charts $\chart_i$, $\chart_j $ with $ 
        \|\chart_i\|_{\Hoeld{m}[\alpha], r}^{\textnormal{harm}},
        \|\chart_j\|_{\Hoeld{m}[\alpha], r}^{\textnormal{harm}}
    \leq
        Q
$ we first notice that by the harmonicity condition $ 
        \| \laplacian_g \chart_i^{-1}\circ \chart_j \|_{\Hoeld{m}[\alpha]}
    =
        \| 0 \|_{\Hoeld{m}[\alpha]}
    =
        0
$ and moreover, in harmonic coordinates, as calculated e.g.\ in \cite[11.2.3]{Petersen16}, $
    \laplacian_g = g^{kl} \partial_k \partial_l
$. If we restrict to the domain $ 
        \Omega'
    \coloneqq
        \chart_j^{-1}(\chart_i(\Openball{0}{\nicefrac{r}{2}}) \cap \chart_j(\Openball{0}{\nicefrac{r}{2}}) )
$ the above estimate becomes
\begin{equation}
        \label{eq:transition_estimate}
        \| \chart_i^{-1}\circ \chart_j\|_{\Hoeld{m+2}[\alpha]}
    \leq
        C \|\chart_i^{-1}\circ \chart_j\|_{\Hoeld{0}}
    \leq
        C \cdot \nicefrac{r}{2}
\end{equation}
where $ C = C( n, m, \alpha, Q, r) $, assuming that $\chart_i^{-1}\circ \chart_j$ is already in $\Hoeld{m+2}[\alpha]$.
But actually the regularity assumption on $\chart_i^{-1}\circ \chart_j$ can be removed: 
\textcite{Taylor06} proves that $\chart_i^{-1}\circ \chart_j$ is in $\Hoeld{1}$ and also the Sobolev class $ \Sobolev{2}[p] $ for all $p\in [1,\infty)$.
By standard existence and uniqueness results \cite[\citeTheorems 6.8, 8.9]{GilbargTrudinger15} we obtain that the transition function is actually in $\Hoeld{2}[\alpha]$.
For non-harmonic chart norms a similar result with one lower degree of regularity holds \cite{Taylor06}.
This answers the initial question, i.e.\ any harmonic $\Hoeld{}[\alpha]$-regular atlas (that is a cover of $M$ by harmonic charts in which the metric tensor is $\Hoeld{}[\alpha]$-regular) is a $\Hoeld{2}$-regular atlas. But a classical theorem of Whitney \cite[Theorem~2.9]{Hirsch97} implies that this atlas (as a $\Hoeld{1}$-atlas) is compatible with exactly one smooth atlas on $M$.
Especially, every $
	(M, g) \in \mathcal{M}^\Dim(\Hoeld{m}[\alpha]\leq_r^{\textnormal{harm}} Q) 
$ is a smooth manifold with a smooth structure distinguished by $g$.

\begin{subequations}
The two basic motivations for the classes $\mathcal{M}^\Dim(\Hoeld{m}[\alpha]\leq_r^{\textnormal{harm}}Q)$ are, first, that such a class viewed as a space with the Gromov-Hausdorff topology is compact:
For every $ n \geq 2$ and $s, Q > 0 $ 
\begin{equation}\label{eq:fundamentalThm}
    \mathcal{M}^\Dim(\Hoeld{m}[\alpha]\leq_r^{\textnormal{harm}}Q)
    \text{ is compact}
\end{equation}
with respect to the Gromov-Hausdorff topology.
This statement is sometimes called Fundamental Theorem of Convergence Theory \cite[11.3.5]{Petersen16}.

Second, that such regularity is implied by geometric conditions
\cites[11.4.4]{Petersen16}{AndersonCheeger92}: Let $\iota > 0$, $\alpha \in (0,1)$, and $ \underline{\kappa} \in \R $.
For all $Q>0$ there is $ r> 0 $ such that every pointed Riemannian manifold $ (M, g,\basePt) $ satisfies
\begin{equation}\label{eq:lowerRicciBd}
    \injRad(M) \geq \iota \text{ and } \Ricci \geq \underline{\kappa}
        \implies
            (M, g,\basePt) \in
            \mathcal{M}^\Dim(\Hoeld{\alpha}\leq_r^{\textnormal{harm}} Q) 
\text{.}
\end{equation}
\Textcite{Anderson90} concluded $\Hoeld{1}[\alpha]$-regularity
\begin{equation}\label{eq:absoluteRicciBd}
    	\injRad(M) \geq \iota \text{ and } \Ricci M \subset [\underline{\kappa}, \overline{\kappa}] \leq \kappa
    \implies
		(M, g,\basePt) \in \mathcal{M}^n(\Hoeld{1}[\alpha]\leq_r^{\textnormal{harm}} Q)
\end{equation}
for any $\underline{\kappa}, \overline{\kappa} \in \R $.
\end{subequations}

As a first application we prove the following volume comparison theorem for Betti numbers:

\comparisonBetti*


\begin{proof}
	The proof relies on a corollary of \textcite[Corollary~4.3]{Bowen15}.
	There is a flaw in the proof of theorem \cite[Theorem~4.1]{Bowen15} on which this corollary is based on, but a remedied version is found in a preprint by \textcite[Theorem~2.5]{AbertBergeronBiringerGelander19}.
	
	Fix some $i = 0, \ldots, d$.
	\textcite[Corollary~4.3]{Bowen15} (note that the different assumption on the injectivity radius therein is a misprint as becomes apparent from a glance at Theorem~4.1 therein) proved that for a sequence $M_\xN$ with $\injRad(M_\xN) > \iota $, $ \Ricci(M_\xN) \geq \underline{\kappa} $, and $ \Sec(M_\xN) \leq \overline{\kappa} $ the quotients
	\[
		\frac{b_i(M_\xN)}{\vol(M_\xN)}
	\]
	converge provided that the sequence $ M_\xN $ converges with respect to the so-called Benjamini-Schramm topology.
	In this topology each $M_\xN$ is associated to a probability measure on a space $\mathbb{M}$ of equivalence classes of pointed metric measure spaces that is supported on the set $ \setBuilder{(M_\xN, p)}{ p\in M } $.
	Due to \cref{eq:lowerRicciBd,eq:fundamentalThm} this measure has support in the compact set $\mathcal{M}^\Dim(\Hoeld{}[\alpha]\leq_r^{\textnormal{harm}}Q)$ for some suitable $r$ and $Q$.
	The Benjamini-Schramm topology is defined as the weak topology of probability measures on $\mathbb{M}$.
	
	Bowen's theorem implies that the real-valued map $ \phi_i\colon M \mapsto \frac{b_i(M)}{\vol(M)} $ extends to a continuous map on the closure
	\[
			\mathcal{M}_{\iota, \underline{\kappa}, \overline{\kappa}}
		\coloneqq
			\overline{\setBuilder{M}{
				\injRad(M) > \iota, 
				\Ricci(M) \geq \underline{\kappa}, 
				\Sec(M) \leq \overline{\kappa} 
			}} \subset \Pr(\mathbb{M})
	\]
	in the space of probability measures.
	By \cref{eq:fundamentalThm} the space $\mathcal{M}^\Dim(\Hoeld{}[\alpha] \leq_r^{\textnormal{harm}}Q )$ is compact. 
	Thus it is closed in $\mathbb{M}$.
	Hence every limit probability measure in the closure $\mathcal{M}_{\iota, \underline{\kappa}, \overline{\kappa}}$ is again supported on $\mathcal{M}^\Dim(\Hoeld{}[\alpha]\leq_r^{\textnormal{harm}}Q)$.
	Since $\mathcal{M}^\Dim(\Hoeld{}[\alpha]\leq_r^{\textnormal{harm}}Q)$ is compact \cref{eq:fundamentalThm}, so is the space of probability measures thereon with respect to the weak topology \cite[11.5.4]{Dudley02}.
	Thus being a closed subset of a compact space $\mathcal{M}^\Dim(\Hoeld{}[\alpha]\leq_r^{\textnormal{harm}}Q)$ is compact.
	Hence the image $\phi_i(\mathcal{M}_{\iota, \underline{\kappa}, \overline{\kappa}})$ is compact.
	Thus this image is bounded, i.e.\ $ \left|\frac{b_i(M)}{\vol(M)}\right| $ is bounded.
	But this is the claim.
\end{proof}

\section{Euler Characteristic}
\label{sec:EulerChar}

The Euler characteristic can be expressed by integration over the Euler class $e(M) = \mathrm{Pf}(\frac{1}{2\pi}  \Riem_\nabla)$ for a metric connection $\nabla$ where $ \mathrm{Pf} $ is the Pfaffian. Since the Pfaffian is only $ \linGr{SO}{d} $-invariant, the theory does not work the same way as explained in \cref{ssec:charNumb} but works if $\nabla$ is a metric connection.
Hence the method of a piecewise Euclidean connection as used below does not work and we have to work directly with the Riemannian metric.
Luckily, under an additional upper curvature assumption \textcite{AndersonCheeger92} also provide a degree two Sobolev regularity
\begin{equation}\label{eq:absoluteRicciBd_Sobolev}
    	\injRad(M) \geq \iota \text{ and } \Ricci M \subset [\underline{\kappa}, \overline{\kappa}] \leq \kappa
    \implies
		(M, g,\basePt) \in \mathcal{M}^\Dim(\Sobolev{2}[p]\leq_r^{\textnormal{harm}} Q )
\text{,}
\end{equation}
for all $\underline{\kappa}, \overline{\kappa} \in \R $, $p \in [\Dim,\infty) $, $Q>0$ and some $r = r(\underline{\kappa}, \overline{\kappa}, p, Q)$,
where the class $\mathcal{M}^\Dim(\Sobolev{2}[p]\leq_r^{\textnormal{harm}})$ is defined in parallel to Definition~\ref{def:Holder_chartNorm} with conditions \cref{eq:normBd,eq:normBd_harmonic} retained and condition \cref{eq:normBd_Holder} replaces by
\begin{equation}
    \label{eq:normBd_Sobolev}
        r^{2 - \Dim/p} \|\nabla^k g_\dbBlank \|_{\Lebesgue{p}}
    \leq
        Q
\text{.}
\end{equation}

We turn to the goal of this section.

\comparisonEuler*

In this theorem \cref{claim:thm:comparison_Euler_geom} directly follows from \cref{eq:absoluteRicciBd_Sobolev} and \cref{claim:thm:comparison_Euler_Sobolev}.
First we need a lemma that later will also be used in the proof of \cref{thm:comparison}.

\begin{lemma}\label{lem:volBound}
	Let $ d \geq 1 $, $ r>0$, $Q>0$, $ \varrho \in (0, e^{-Q} r ] $, and $ \alpha\in(0,1] $. 
	For any $ M $ with $ \|M\|_{\Hoeld{\alpha},r} \leq Q $ and $ x\in M $ we have
	\begin{equation}\label{eq:volBound}
		\varvol(\Closedball{x}{\varrho}) \geq v
	\end{equation}
	for a constant $ v = v(d,r, Q, \varrho)>0 $.
\end{lemma}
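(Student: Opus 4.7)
My plan is to extract a chart realising the norm bound and then read off the volume from the bi-Lipschitz control it provides. By the Arzelà--Ascoli argument noted after Definition~\ref{def:Holder_chartNorm}, the infimum in $\|M\|_{\Hoeld{\alpha},r}^{\textnormal{harm}}$ is attained, so there is a harmonic chart $\chart\colon(\Openball{0}{r},0)\to(M,x)$ with $\|\chart\|_{\Hoeld{\alpha},r}^{\textnormal{harm}}\leq Q$. In particular the two-sided pinching \cref{eq:normBd} and the Lipschitz half of \cref{eq:distanceComparison} hold for this $\chart$.

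Next, I apply the upper bound $\dist{\chart(y)}{\chart(0)}_g\leq e^{Q}|y|$ from \cref{eq:distanceComparison} with $y$ ranging over $\Openball{0}{e^{-Q}\varrho}$. The hypothesis $\varrho\leq e^{-Q}r$ gives $e^{-Q}\varrho\leq e^{-2Q}r<r$, so this ball lies inside the domain of $\chart$, and the estimate yields the inclusion
\[
    \chart(\Openball{0}{e^{-Q}\varrho})\subset\Closedball{x}{\varrho}.
\]

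Finally, I bound the volume of the image from below by integrating the volume density $\sqrt{\det g_{\dbBlank}}$ over this Euclidean ball. The lower part of \cref{eq:normBd} gives $g_{\dbBlank}\geq e^{-2Q}\delta$ as a symmetric bilinear form, hence $\det g_{\dbBlank}\geq e^{-2Qd}$ pointwise, so
\[
    \varvol(\Closedball{x}{\varrho})\geq\int_{\Openball{0}{e^{-Q}\varrho}}\sqrt{\det g_{\dbBlank}}\,dy\geq e^{-Qd}\,\omega_d\,(e^{-Q}\varrho)^{d}=\omega_d\,e^{-2Qd}\varrho^{d},
\]
with $\omega_d$ the volume of the Euclidean unit ball. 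This proves the lemma with $v\coloneqq\omega_d e^{-2Qd}\varrho^{d}$ — in fact only depending on $d$, $Q$, and $\varrho$. There is no real obstacle: neither the Hölder regularity of $g$ nor the harmonicity of $\chart$ enter the argument, all the work being hidden in the observation that \cref{eq:normBd} is built into the definition of the chart norm.
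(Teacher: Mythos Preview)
Your proof is correct and follows essentially the same route as the paper: pick a chart realizing the norm bound, use \cref{eq:distanceComparison} to trap a Euclidean ball of radius $e^{-Q}\varrho$ inside $\Closedball{x}{\varrho}$, and then use the eigenvalue pinching \cref{eq:normBd} to bound $\sqrt{\det g_{\dbBlank}}$ from below. The only cosmetic differences are that the paper phrases the determinant bound via the smallest eigenvalue $\lambda_1$ and arrives at a slightly different (but equally valid) numerical constant.
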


\begin{proof}
	Choose a chart $ \chart \colon (\Openball{0}{r}, 0) \to (M,\basePt)  $ with $ \|\chart\|_{\Hoeld{}[\alpha],r} \leq Q $.
	We have
	\begin{align*}
				\vol_M \Closedball{x}{\varrho} 
		&\geq 	\vol_M \chart (\Closedball{0}{e^{-Q}\varrho}) 
	&&\text{by \cref{eq:distanceComparison}} \\
		&=    	\int_{\Closedball{0}{e^{-Q}\varrho}} \sqrt{|\det( g_{\dbBlank})|} \diff x \\
		&\geq 	\int_{\Closedball{0}{e^{-Q}\varrho}} \sqrt{|\lambda_1|^d} \diff x  
	\shortintertext{where $\lambda_1$ is the smallest eigenvalue of $g_{\dbBlank}$ at $ x $}
		&\geq \int_{\Closedball{0}{e^{-Q}\varrho}} \sqrt{(e^{-Q})^d} \diff x
	&&\text{by \cref{eq:normBd}}\\
		&= 		\vol_\text{Eucl.}(\Closedball{0}{e^{-Q}\varrho}) \cdot e^{-dQ / 2} 
	&&\eqqcolon v(d,r, Q, \varrho).
			\qedhere
	\end{align*}
\end{proof}

\begin{proof}[\cref{thm:comparison_Euler}]
	As mentioned above, we only have to prove \cref{claim:thm:comparison_Euler_Sobolev}.
	Let $M \in \mathcal{M}^\Dim(\Sobolev{2}[p]\leq_r^{\textnormal{harm}} Q) $.
	Sobolev's inequality implies $M \in \mathcal{M}^\Dim(\Hoeld{1}[\alpha]\leq_r^{\textnormal{harm}} Q) $
	for $ 2 - \nicefrac{\Dim}{p} = 1 + \alpha $.
	Thus also $M \in \mathcal{M}^\Dim(\Hoeld{}[\alpha]\leq_r^{\textnormal{harm}} Q) $.
	
	Choose a maximal collection of points $\{p_i\}_{i\in I}$ that is $ 2e^{-Q-2} r $-separated (i.e.\ any two points in the collection have distance at least $ 2e^{-Q-2} r $).
	Such a system is automatically $ e^{-Q-1} r $-covering (meaning that $
		\bigcup_{i\in I} \Openball{p_i}{e^{-Q-1} r} = M 
	$).
	Choose charts $ 
		\chart_i \colon (\Openball{0}{r}, 0) \to (M, p_i) 
	$ with $ \| \varphi_i \|_{\Hoeld{}[\alpha], r}^{\textnormal{harm}} \leq  Q $ for each $i \in I$.
	By the length comparison \cref{eq:distanceComparison} the space $M$ is covered by the family $ 
		\{\chart_i (\Openball{0}{e^{-1}r})\}_{i\in I} 
	$. By the same inequality and the fact that $\{p_i\}_{i\in I}$ is $ e^{-Q-2} r $-separated, the balls $
		\{\chart_i (\Openball{0}{ e^{-2Q - 2} r/2})\}_{i\in I} 
	$ are pairwise disjoint. This gives the estimate
	\begin{equation}\label{eq:numbBound_Euler}
		\# I \leq \frac{\vol (M)}{ v }
	\end{equation}
	for the constant $v = v(d,r, Q, e^{-2Q - 2} r/2)$ from \cref{eq:volBound}.
	
	We estimate for the Levi-Civita connection $\nabla$
	\begin{equation*}
			\left|\chi\right|
		=	\left|\int_M e(M) \right|
		=	\left|\int_M \mathrm{Pf}\left(\frac{1}{2\pi} \Riem_{\varphi_i^*\nabla} \right) \right|
		\leq\sum_{i\in I} \left|\int_{\Openball{0}{r}} 
			\mathrm{Pf}\left(\frac{1}{2\pi} \Riem_{\varphi_i^*\nabla}\right) \right|
	\text{.}
	\end{equation*}
	Remember that
	\begin{align*}
			R^k_{\lambda\mu\nu}	
		&=  
				\left(\Gamma^k_{\nu\lambda, \mu}\right)_{[\mu\nu]} 
			+	\sum_{\kappa}\left(\Gamma^k_{\mu\kappa}\Gamma^\kappa_{\nu\lambda}\right)_{[\mu\nu]}  
	\quad\text{and}
	\\
			\Gamma _{\mu\nu}^{k}
		&=	
			\frac 1 2 \sum_\kappa g^{k\kappa }
			\left(g_{\kappa \mu, \nu} + g_{\kappa \nu, \mu}-\partial _{\kappa }g_{\mu\nu}\right)
	\text{.}
	\end{align*}
	(where $ f_{,\mu} = \partial_{\mu} f = \frac{\partial}{\partial x_\mu} f $, $ h_{[\mu\nu]} \coloneqq h_{\mu\nu} - h_{\nu\mu} $, and all indices range over $\{1,\ldots,\Dim\}$ if not specified differently) and that $g^{\dbBlank}$ denotes the inverse of $g_{\dbBlank}$.
	Thus in each chart the curvature is a polynomial
	\[
		P \in 
		\R\left[g_{kl,\mu\nu}, g_{kl,\mu}, g^{kl}{}_{,\mu},  g^{kl} \mid k, l, \mu, \nu = 1, \ldots, \Dim \right] 
	\]
	(i.e.\ a polynomial in $ d^4 + d^3 +d^3 + d^2 $ variables)
	such that $
		P(g_{kl,\mu}, g^{kl}{}_{,\mu},  g^{kl}) \in \R[g_{kl,\mu\nu} \mid k, l, \mu, \nu = 1, \ldots, \Dim ]
	$ is linear.
	By Sobolev's inequality the variable $ g_{kl,\mu} $ are $ \Hoeld{}[\alpha] $-bounded, so certainly also $ \Hoeld{0} $-bounded.
	By \cref{eq:reciprocalEstimate} also the $g^{kl}$'s are $ \Hoeld{1}[\alpha] $-bounded.
	Thus the $g^{kl}{}_{,\mu}$'s and  $g^{kl}$'s are $\Hoeld{0} $-bounded.
	
	Therefore the integrand $\mathrm{Pf}\left(\frac{1}{2\pi} \Riem_{\varphi_i^*\nabla}\right)$ is a polynomial of degree $\nicefrac{\Dim}{2}$ in the $g_{kl,\mu\nu}$'s.
	Each $ g_{kl,\mu\nu} $ is $\Lebesgue{p}$-bounded with $p > \nicefrac{\Dim}{2}$.
	Due to Hölder's inequality each monomial $ \prod_{n=1}^{d/2} g_{k_n l_n,\mu_n\nu_n} $ is $ \Lebesgue{1} $-bounded by $C(r)\cdot \prod_{n=1}^{d/2} \left\|g_{k_n l_n,\mu_n\nu_n}\right\|_{\Lebesgue{p}} $ for some constant $C(r)$.
	Thus each integral $\int_{\Openball{0}{r}} 
				\mathrm{Pf}\left(\frac{1}{2\pi} \Riem_{\varphi_i^*\nabla}\right)$ is bounded by some constant $C(r, Q, \Dim, p)$.
	This finishes the proof together with \cref{eq:numbBound_Euler} as we have now $ 
			 	\left|\chi\right|
		\leq 	(\# I) \cdot C(r, Q, \Dim, p)
		\leq		\frac{C(r, Q, \Dim, p)}{v} \vol (M)
	 $.
\end{proof}

\section{Piecewise Euclidean connection}
\label{sec:piecewiseEuclConn}

In view of the later generalization in \cref{sec:comparison_C1alpha} we want to define the curvature tensor of a connection in a setup of a manifold $ M $ with a locally finite atlas $ 
	\{\chart_i\colon \Openball{0}{\varrho} \to M\}_{i\in I} 
$ of regularity $ \Hoeld{2}[\alpha] $. 
Such an atlas can carry at most a $ \Hoeld{1}[\alpha] $ regular tensor,
	but only a $ \Hoeld{}[\alpha] $-regular connection because the transition function $ \chart^{-1} \circ \chartVar $ from 
			a chart $ \chartVar\colon V \to M $ 
		to some 
			chart $ \chart\colon U \to M $ involves second derivatives.
This transition function is given by
\begin{align}
\nonumber
		\prescript{\chart}{}{\Gamma}_{\mu \nu}^k
	&=
		(\chart^{-1}\circ\chartVar)_*(\prescript{\chartVar}{}{\Gamma}_{\dbBlank}^{.})
	\intertext{where  
		$ x = \chart^{-1} \circ \chartVar $ and $ y = \chartVar^{-1} \circ \chart $}
	\label{eq:connectionTrafo}
	&= 
			\sum_{{\mu'}, {\nu'}, l}
				y_{{\mu'}, \mu} y_{{\nu'}, \nu}
				(\prescript{\chartVar}{}{\Gamma}_{{\mu'} {\nu'}}^l \circ y)
				(x_{k,l} \circ y)
		+ 
			\sum_{l} (x_{k,l} \circ y)	(y_{l,\mu \nu}).
\end{align}

Take a partition of unity $ \{\partUnity_i\colon M \to \R\}_{i\in I} $ compatible with the atlas,
i.e.\ $ \supp \partUnity_i \subset \chart_i( \Openball{0}{\varrho} ) $ and $ \sum_{i\in I} \partUnity_i \equiv 1 $.
With such a datum $ (\{\chart_i\}, \{\partUnity_i\}) $ we define the \definiendum{piecewise Euclidean connection} by a convex combination of the Euclidean connections on each chart
\begin{equation}\label{eq:pieceEuclConnection}
	\nabla_{ \{\chart_i\}, \{\partUnity_i\} }
		= \sum_{i\in I} \partUnity_i (\chart_i)_* \nabla_{\textnormal{Eucl.}}.
\end{equation}
This makes sense as the convex combination of connections is again a connection.
In case $ 
		\prescript{\chartVar}{}{\Gamma}_{\mu \nu}^k 
	= 	\prescript{\textnormal{Eucl.}}{}{\Gamma}_{\mu \nu}^k 
	=	0 
$ formula \cref{eq:connectionTrafo} simplifies to $
		\prescript{\chart}{}{\Gamma}_{\mu \nu}^k
	=
		\sum_{l} (x_{k,l} \circ y) \linebreak[0]	(y_{l,\mu \nu}) $.
For the connection $ \nabla_{ \{\chart_i\}, \{\partUnity_i\} } $ on $ \chart $ the Christoffel symbols are given by
 \begin{align}
 	\prescript{\chart}{}{\Gamma}_{\mu \nu}^k
 		&=  
 			\sum_{i\in I} \partUnity_i 
 				((\chart^{-1} \circ \chart_i)_{*} \prescript{\textnormal{Eucl.}}{}{\Gamma}_{\dbBlank}^{.})_{\mu \nu}^k 
 		\nonumber\\
 		&= 
 			\sum_{i\in I} \partUnity_i \sum_{l} (x^i_{k,l} \circ y^i)	(y^i_{l,\mu \nu}) 
 		\eqqcolon
 		 			\sum_{i\in I} \partUnity_i \prescript{i}{}{\Gamma}_{\mu \nu}^k 
 		\label{eq:Christoffel}
\end{align}
where $ 
		\prescript{i}{}{\Gamma}_{\mu \nu}^k 
	\coloneqq 
		((\chart^{-1} \circ \chart_i)_{*} \prescript{\textnormal{Eucl.}}{}{\Gamma}_{\dbBlank}^{.})_{\mu \nu}^k 
$, $ x^i \coloneqq \chart^{-1} \circ \chart_i $, $ y^i \coloneqq \chart_i^{-1} \circ \chart $, and, by abuse of notation, $ \partUnity_i \coloneqq \partUnity_i \circ \chart $.
 			
We (formally---in view of \cref{sec:comparison_C1alpha}) calculate the curvature tensor on $ \chart $ from the standard coordinate definition
\begin{align}
	\label{eq:RiemannianCurvature}
 	\prescript{\chart}{}{R}^k_{\lambda\mu\nu}	
 		&=  
 		 		\left(\prescript{\chart}{}{\Gamma}^k_{\nu\lambda, \mu}\right)_{[\mu\nu]} 
 		 	+ 
 		 		\sum_{\kappa}\left(
 		 			\prescript{\chart}{}{\Gamma}^k_{\mu\kappa}\prescript{\chart}{}{\Gamma}^\kappa_{\nu\lambda}
			 	\right)_{[\mu\nu]}  
 	\displaybreak[0]\\
 	\intertext{where we used the standard shorthand $ h_{[\mu\nu]} \coloneqq h_{\mu\nu} - h_{\nu\mu} $ 
 		}
 		&= 	\sum_{i\in I} 
 				 \left( (\partUnity_i \prescript{i}{}{\Gamma}_{\nu \lambda}^k)_{,\mu} \right)_{[\mu\nu]}
 			+ 
 			\sum_{\kappa}\left(
 			 		 			\prescript{\chart}{}{\Gamma}^k_{\mu\kappa}\prescript{\chart}{}{\Gamma}^\kappa_{\nu\lambda}
 						 	\right)_{[\mu\nu]} 
	 \nonumber\displaybreak[0]
	 \intertext{setting $ 
	  			A \coloneqq 
	  				\sum_{\kappa}\left(
	  		 			\prescript{\chart}{}{\Gamma}^k_{\mu\kappa}\prescript{\chart}{}{\Gamma}^\kappa_{\nu\lambda}
	 			 	\right)_{[\mu\nu]}  
	  		 $ and $
	  		 		B_i 
	  		 	\coloneqq 
	  		 		\left(\partUnity_{i, \mu} \prescript{i}{}{\Gamma}_{\nu \lambda}^k\right)_{[\mu\nu]}
	  		 $}
 		& = 
 			A +
 			\sum_{i\in I} 
 					B_i
 				+
 				\left(
	 					\partUnity_i \prescript{i}{}{\Gamma}_{\nu \lambda, \mu}^k
	 			\right)_{[\mu\nu]}
 			\nonumber\displaybreak[0]\\
 		& = 
 			A +
 			\sum_{i\in I} B_i + \partUnity_i \bigl(\textstyle
 			 				\sum_{l} 
 			 					((x^i_{k, l} \circ y^i ) y^i_{l, \nu\lambda})_{,\mu}
 			 					\bigr)_{[\mu\nu]}
 			\nonumber\displaybreak[0]\\
 		&=
	 		A + \sum_{i\in I} B_i + \partUnity_i \sum_{l} 
	 				\big((x^i_{k, l} \circ y^i )_{,\mu} y^i_{l, \nu\lambda}\big)_{[\mu\nu]}
	 				+
	 					\underbrace{\big((x^i_{k, l} \circ y^i ) y^i_{l, \nu\lambda\mu}\big)_{[\mu\nu]}}_{=0}
 		\nonumber \\
 		&= 
 			\bigg(
 			\sum_{\kappa}
	 			\prescript{\chart}{}{\Gamma}^k_{\mu\kappa}\prescript{\chart}{}{\Gamma}^\kappa_{\nu\lambda}
 			+
 			\sum_{i\in I} 
 					\partUnity_{i, \mu} \prescript{i}{}{\Gamma}_{\nu \lambda}^k
 				+	\partUnity_i \sum_{l} (x^i_{k, l} \circ y^i )_{,\mu} y^i_{l, \nu\lambda}
		 		\bigg)_{[\mu\nu]}
\text.
\label{eq:curvature_connection}
\end{align}
The trick is now to define $ \Riem_{\nabla_{ \{\chart_i\}, \{\partUnity_i\} } } $ not by \cref{eq:RiemannianCurvature},
	which involves uncontrollable third derivatives, but by formula \cref{eq:curvature_connection}.
In the generalization of \cref{sec:comparison_C1alpha} the term \cref{eq:RiemannianCurvature} will not even be defined.

\begin{lemma}\label{lem:curvature_CalphaBound}
	The coordinate function \cref{eq:curvature_connection} is $ \Hoeld{}[\alpha] $-bounded provided that the transition functions involved are $ \Hoeld{2}[\alpha] $-bounded and that the partition of unity is $ \Hoeld{1}[\alpha] $-bounded.
\end{lemma}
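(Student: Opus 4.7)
The plan is a bookkeeping verification: show each factor appearing in \cref{eq:curvature_connection} lies in $\Hoeld{1}[\alpha]$ or $\Hoeld{}[\alpha]$, and conclude by repeated use of the product estimate \cref{eq:prodEstimate}, the composition estimate \cref{eq:compEstimate}, and the elementary monotonicity \cref{eq:Holder_elementaryEstimate}.

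First I would treat the building block $\prescript{i}{}{\Gamma}_{\mu\nu}^k = \sum_l (x^i_{k,l}\circ y^i)\, y^i_{l,\mu\nu}$. Since $x^i,y^i \in \Hoeld{2}[\alpha]$, the derivative $x^i_{k,l}$ lies in $\Hoeld{1}[\alpha]$, and composing with $y^i \in \Hoeld{1}[\alpha]$ via \cref{eq:compEstimate} gives $x^i_{k,l}\circ y^i \in \Hoeld{1}[\alpha]$. The second factor $y^i_{l,\mu\nu}$ lies in $\Hoeld{}[\alpha]$, so by \cref{eq:prodEstimate} the product $\prescript{i}{}{\Gamma}_{\mu\nu}^k$ is $\Hoeld{}[\alpha]$-bounded. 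Multiplying by $\partUnity_i \in \Hoeld{1}[\alpha]$ and summing over the locally finite index set then gives $\prescript{\chart}{}{\Gamma}^k_{\mu\nu}\in\Hoeld{}[\alpha]$, whence the quadratic term $\sum_\kappa \prescript{\chart}{}{\Gamma}^k_{\mu\kappa}\prescript{\chart}{}{\Gamma}^\kappa_{\nu\lambda}$ is $\Hoeld{}[\alpha]$-bounded by one more application of \cref{eq:prodEstimate}.

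The remaining two summands are handled in exactly the same spirit. In $\partUnity_{i,\mu}\,\prescript{i}{}{\Gamma}^k_{\nu\lambda}$, the factor $\partUnity_{i,\mu}$ is $\Hoeld{}[\alpha]$ as a first derivative of a $\Hoeld{1}[\alpha]$ function, and it multiplies the already controlled $\prescript{i}{}{\Gamma}^k_{\nu\lambda}$. In $\partUnity_i (x^i_{k,l}\circ y^i)_{,\mu}\, y^i_{l,\nu\lambda}$, the derivative $(x^i_{k,l}\circ y^i)_{,\mu}$ lies in $\Hoeld{}[\alpha]$ by virtue of the $\Hoeld{1}[\alpha]$-bound on $x^i_{k,l}\circ y^i$ obtained above, while $\partUnity_i \in \Hoeld{1}[\alpha]$ and $y^i_{l,\nu\lambda}\in\Hoeld{}[\alpha]$; one final use of \cref{eq:prodEstimate} closes the argument.

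The only subtlety I anticipate is uniformity of the bound under the sum over $i\in I$: one needs local finiteness of the atlas so that only a controlled number of summands are nonzero at each point of $\chart(\Openball{0}{\varrho})$. Once that is in place, the constants from \cref{eq:prodEstimate,eq:compEstimate} can be collected into a single $\Hoeld{}[\alpha]$-bound. The entire virtue of having rewritten the curvature in the form \cref{eq:curvature_connection} rather than \cref{eq:RiemannianCurvature} is that the uncontrollable third-derivative contributions $y^i_{l,\nu\lambda\mu}$ have already cancelled, so no additional analytic input beyond the estimates of \cref{ssec:functionNorms} is required.
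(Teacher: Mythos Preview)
Your proposal is correct and follows precisely the approach of the paper, which dispatches the lemma in a single line by citing the product and composition estimates \cref{eq:prodEstimate,eq:compEstimate}; your write-up is simply a careful unpacking of that sentence, term by term. The local-finiteness remark you add is a reasonable caution, though the paper does not spell it out here.
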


\begin{proof}
	This is a direct consequence of the estimates \cref{eq:prodEstimate,eq:compEstimate}.
\end{proof}

\begin{definition}\label{def:randomCurvature}
	Let $ \{\chart_i\}$ be a locally finite $\Hoeld{2}$-atlas of $ M $ and $ \{\partUnity_i\}  $ a corresponding partition of unity.
	We say that the function on a chart $ \chart $ defined by \cref{eq:curvature_connection} is the \definiendum{curvature tensor} of the piecewise Euclidean connection $ \nabla_{ \{\chart_i\}, \{\partUnity_i\} } $ as defined in \cref{eq:pieceEuclConnection} and denote it by $ 
		\Riem( \{\chart_i\}, \{\partUnity_i\} ) \coloneqq \Riem_{\nabla_{ \{\chart_i\}, \{\partUnity_i\} }} $.
\end{definition}

\section{Proof of the comparison theorem}
	\label{sec:comparisonThm}

\comparison*

\begin{proof}
	By \cref{eq:lowerRicciBd} we know that all Riemannian manifolds under consideration are all in a class $\mathcal{M}^\Dim(\Hoeld{}[\alpha]\leq_r^{\textnormal{harm}} Q)$ for any $r$ and $Q = Q(\Dim, \underline\kappa, \iota, \alpha, r)$.
	Let $M \in \mathcal{M}^\Dim(\Hoeld{}[\alpha]\leq_r^{\textnormal{harm}} Q) $.
	As in the proof of \cref{thm:comparison_Euler} we can use Lemma~\ref{lem:volBound} to find an atlas of charts $ 
			\chart_i \colon (\Openball{0}{r}, 0) \to (M, p_i) 
	$ with $ \| \varphi_i \|_{\Hoeld{}[\alpha], r}^{\textnormal{harm}} \leq  Q $ for each $i \in I$ such that
	$M$ is covered by $ \{ \varphi_i(\Openball{0}{ e^{-Q-1} r / 2}) \}_{i\in I} $ and
	\begin{equation}\label{eq:numbBound}
		\# I \leq \frac{\vol (M)}{ v }
	\end{equation}
	for the constant $v = v(d,r, Q, e^{-2Q - 2} r/4)$ from \cref{eq:volBound}.
	
	We can define a partition of unity on $M$ by choosing a smooth bump function on $ b\colon \R^d\to [0,1] $ such that $ \supp b \subset \Openball{0}{\nicefrac r 2} $ and $b|_{\Closedball{0}{e^{-1}r/2}} = 1$, and setting
	\begin{align*}
			\tilde{\partUnity}_i(x) 
		&\coloneqq
			\begin{cases*}
				b \circ \chart_i^{-1} 	\protect& if $x \in \chart_i(\Openball{0}{\nicefrac r 2})$ \\
				0						\protect& else
			\end{cases*}
	\qquad\text{and}\qquad
	\\
			\partUnity_i(x)
		&\coloneqq
			\frac{1}{\sum_{j\in I} \tilde{\partUnity}_j(x) } \tilde{\partUnity}_i (x)
	\end{align*}
	as maps $\tilde{\partUnity}_i\colon M \to [0,\infty)$ and $\partUnity_i\colon M \to [0,1]$. Note that in the denominator at least one summand $ \tilde{\partUnity}_j(x) $ is 1.
	On any chart $ \chart \in \{\chart_i\}_{i\in I} $ the function $\partUnity_i$ reads in $\chart$-coordinates $
			\partUnity_i \circ \chart (x) 
		= 
			(\sum_{j \in I} \tilde{\partUnity}_j\circ\chart )^{-1} \cdot b \circ \chart_i^{-1} \circ \chart
	$.
	Combining the estimate on the transition functions \cref{eq:transition_estimate} with \cref{eq:prodEstimate,eq:inverseEstimate,eq:compEstimate,eq:reciprocalEstimate} we obtain a uniform $\Hoeld{1}[\alpha]$-bound on the $\partUnity_i$'s.
	
	Using \cref{eq:numbBound} we estimate
	\begin{align*}
			\left|\Pi[M]\right|
		&=
			\left|
				\sum_i \int_{\Openball{0}{r/2} } \partUnity_i \circ \chart_i \cdot  \Pi(\prescript{\chart_i}{}\Riem^{\blank}_{\triBlank}) \diff x
			\right|
	\\
		&\leq
			\sum_i \int_{\Openball{0}{r/2} }  \left|\Pi(\prescript{\chart_i}{}\Riem^{\blank}_{\triBlank}) \right| \diff x
	\\
		&\leq
			\frac{\vol (M)}{ v } \cdot \sup_{i\in I} \int_{\Openball{0}{r/2} }  \left|\Pi(\prescript{\chart_i}{}\Riem^{\blank}_{\triBlank}) \right| \diff x
	\text{.}
	\end{align*}
	The integrand $ \Pi(\prescript{\chart_i}{}\Riem^{\blank}_{\triBlank}) $ is a polynomial of degree $\nicefrac{d}{2}$ in the $\prescript{\chart_i}{}\Riem^{\blank}_{\triBlank}$'s, which are $\Hoeld{}[\alpha]$-bounded by \cref{eq:transition_estimate} and Lemma~\ref{lem:curvature_CalphaBound}.
	Hence, by \cref{eq:prodEstimate}, there is a $\Hoeld{0}$-bound on the integrand.
	Thus  $\int_{ \Openball{0}{r/2} }  |\Pi(\prescript{\chart_i}{}\Riem^{\blank}_{\triBlank}) | \diff x$ is bounded by a constant.
\end{proof}

\section{Comparison theorme for $\Hoeld{1}[\alpha]$-regular manifolds}
\label{sec:comparison_C1alpha}

In this section we extend the proof of \cref{thm:comparison} to the class of Hölder regular manifolds $
	 \mathcal{M}^\Dim(\Hoeld{}[\alpha]\leq_r^{\textnormal{harm}} Q)
$, cf.\ \cref{ssec:chartNorms}:
\comparisonHold*
The proof works the same way: Just don't read the first sentence; the rest goes through with the exception that one has to pay attention that the integral in the proof actually calculates the characteristic number (Proposition~\ref{prop:connIndep}).
To check this it will be necessary as well to assure the otherwise trivial fact that the piecewise Euclidean connection is actually a connection (Lemma~\ref{lem:coordinateIndep}).
This is to say that the piecewise Euclidean connection---defined by \cref{eq:pieceEuclConnection}---is a tensor, i.e.\ is coordinate independent:
\begin{restatable}{lemma}{coordinateIndep}\label{lem:coordinateIndep}
	Let $ M $ be a manifold, $ \chart_i\colon U_i \to M $ be charts in a $ \Hoeld{2} $-atlas of $ M $, and $ \partUnity_i  $ be a corresponding $ \Hoeld{1} $-partition of unity.
	For any two charts $ \chart \colon U \to M $ and $ \chart' \colon U'\to M $ the expression defined by \cref{eq:curvature_connection} coincide on $ \chart(U) \cap \chart'(U') $ as (3,1)-tensor, i.e.\
	\[ 
 		\sum_{\lambda', \mu', \nu', k'} 
 				(\prescript{\chart^{\mathrlap{\prime}}}{}{R}^{k'}_{\lambda'\mu'\nu'}
 					\circ x') x'_{\lambda', \lambda} x'_{\mu', \mu} x'_{\nu', \nu} (x_{k, k'} \circ x') 
 	=
 		\prescript{\chart}{}\Riem^k_{\lambda\mu\nu}.
	\]
\end{restatable}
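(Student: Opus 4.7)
My plan is to reduce to the smooth case by mollification, then pass to the limit using continuity of \eqref{eq:curvature_connection} in the Hölder topologies supplied by \cref{ssec:functionNorms}.

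\textbf{Smooth case.} The derivation of \eqref{eq:curvature_connection} from the classical Riemann formula \eqref{eq:RiemannianCurvature} applied to the Christoffel symbols \eqref{eq:Christoffel} only used antisymmetrization in $\mu,\nu$ to cancel the third-derivative term $\partUnity_i \sum_l (x^i_{k,l}\circ y^i)\, y^i_{l,\nu\lambda\mu}$. Hence, whenever the transition functions $y^i$ are $C^3$ and the partition of unity is $C^2$, the right-hand side of \eqref{eq:curvature_connection} coincides with the classical Riemann tensor of the affine connection \eqref{eq:pieceEuclConnection}. Since the convex combination of the pushforward connections $(\chart_i)_*\nabla_{\textnormal{Eucl.}}$ is a genuine affine connection, for smooth data the classical $(3,1)$-transformation law yields the claim.

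\textbf{Approximation.} Fix the charts $\chart,\chart'$ of the statement and take as independent data only $x^i=\chart^{-1}\circ\chart_i$, $x'=\chart^{-1}\circ\chart'$ and $\partUnity_i$. Mollify by convolution with a standard mollifier in $\R^\Dim$ to obtain smooth $x^{i,\epsilon}$, $x^{\prime\epsilon}$, $\partUnity_i^\epsilon$. For $\epsilon$ small enough $x^{i,\epsilon}$ and $x^{\prime\epsilon}$ remain local diffeomorphisms onto their images, so by \eqref{eq:inverseEstimate} and \eqref{eq:compEstimate} the derived quantities $y^{i,\epsilon}=(x^{i,\epsilon})^{-1}$, $y^{\prime\epsilon}=(x^{\prime\epsilon})^{-1}$, $x^{i\prime,\epsilon}=y^{\prime\epsilon}\circ x^{i,\epsilon}$, $y^{i\prime,\epsilon}=y^{i,\epsilon}\circ x^{\prime\epsilon}$ converge in $\Hoeld{2}$ to their non-mollified counterparts; similarly $\partUnity_i^\epsilon\to\partUnity_i$ in $\Hoeld{1}$. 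For this smooth $\epsilon$-data the tensor transformation law of the statement holds at every point of $\chart(U)\cap\chart'(U')$ by the preceding step.

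\textbf{Passage to the limit.} Every term appearing on both sides of the claimed identity is a continuous functional of $x^i, y^i, x^{i\prime}, y^{i\prime}, x', \partUnity_i, \partUnity_{i,\mu}$ in the $\Hoeld{2}$ respectively $\Hoeld{1}$ topologies, by the product, composition and reciprocal estimates \eqref{eq:prodEstimate}, \eqref{eq:compEstimate}, \eqref{eq:reciprocalEstimate}. Local finiteness of $\{\chart_i\}$ ensures that at any fixed point only finitely many indices $i$ contribute. Hence the identity persists as $\epsilon\to 0$, yielding the claim. The main obstacle is preserving the structural compatibility (inverse and composition relations) between the various transition functions under approximation; it is resolved by mollifying only the three pieces of independent data $x^i$, $x'$, $\partUnity_i$ and rebuilding the remaining objects via inversion and composition, so that the Hölder estimates propagate convergence throughout.
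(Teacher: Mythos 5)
Your route---mollify the data, invoke classical tensoriality of the curvature of a smooth connection, and pass to the limit---is genuinely different from the paper's proof of this lemma, which verifies the transformation law by a direct computation using only second derivatives (\cref{sec:coordIndep}); it is, however, exactly in the spirit of the paper's mollification argument for \cref{prop:connIndep}. As written it has one genuine gap: after mollifying the $\partUnity_i$ individually, the smoothed functions $\partUnity_i^\varepsilon$ no longer sum to $1$, so $\sum_i\partUnity_i^\varepsilon(\chart_i^\varepsilon)_*\nabla_{\textnormal{Eucl.}}$ is no longer an affine combination of connections, hence not a connection, and your step ``for this smooth $\varepsilon$-data the tensor transformation law of the statement holds at every point'' fails. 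The sum-to-one property is not cosmetic. Take a single chart and a constant weight $\partUnity_1\equiv c$: in the chart $\chart_1$ itself the expression \cref{eq:curvature_connection} vanishes, while in a second chart it equals $c^2A+cB$ with $A=\bigl(\sum_\kappa\prescript{1}{}{\Gamma}^k_{\mu\kappa}\prescript{1}{}{\Gamma}^\kappa_{\nu\lambda}\bigr)_{[\mu\nu]}$ and $A+B=0$ (the case $c=1$), i.e.\ it equals $c(c-1)A$, which is generically nonzero; so the claimed identity really does use $\sum_i\partUnity_i=1$ (the same place where the paper's computation uses $1_{,\mu}=0$). The repair is the one the paper itself employs in the proof of \cref{prop:connIndep}: replace $\partUnity_i^\varepsilon$ by $\partUnity_i^\varepsilon/\sum_j\partUnity_j^\varepsilon$, which is smooth for small $\varepsilon$ (the denominator is close to $1$), restores the partition-of-unity property exactly, and still converges to $\partUnity_i$ in $\Hoeld{1}$.

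A smaller point: \cref{eq:inverseEstimate,eq:compEstimate} are norm bounds, not continuity statements, so they do not by themselves yield $\Hoeld{2}$-convergence of $(x^{i,\varepsilon})^{-1}$ and of the rebuilt compositions. You need either an argument in the style of \cref{lem:inverseConv}---which, note, requires a H\"older exponent $\alpha>0$ and is therefore not directly available for a bare $\Hoeld{2}$-atlas---or the elementary fact that inversion and composition are continuous under locally uniform $C^2$-convergence when the limit is a $C^2$-diffeomorphism; state and use that fact explicitly (localizing near a point of $\chart(U)\cap\chart'(U')$, which also takes care of the slight shrinking of domains caused by convolution). With the renormalization and this continuity statement added, your mollification argument becomes a valid alternative to the paper's direct computation, at the price of the same technical apparatus the paper reserves for \cref{prop:connIndep}, whereas the appendix computation needs no approximation at all.
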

The standard textbook calculation does not work as it involves third derivatives.
But the calculation can be done by considerably more labor without third derivatives and is found in \cref{sec:coordIndep}.

We turn to the second hurdle.
Recall that in local coordinates the cohomology class $ (\phi(\Pi) )_\Dim \in H^d(M) $ is a polynomial in a curvature tensor $ \Riem_\nabla $ of some connection $\nabla$.
Let's write $ \Pi(\Riem^k_{\lambda\mu\nu}) $ for the differential form.

\begin{lemma}\label{lem:inverseConv}
		Let $ \Omega, \mathcal{O} $, and $ \mathcal{O}' $ be domains with $ \mathcal{O} \subset \mathcal{O}' $, $  m\geq 1 $ an integer, and $ \alpha\in(0,1] $.
		Let $ f_n, f \in \Hoeld{m}[\alpha]( \Omega, \R^d ) $ with inverses 
			$ g_n, g \in \Hoeld{m}[\alpha](\mathcal{O}, \R^d ) $
			(i.e.\ $ g_n\circ f_n = \mathit{id}_{\Omega} $ and $ g\circ f = \mathit{id}_{\Omega} $)
			 such that 
		\[ 
			f_n \xrightarrow{n\to \infty} f \quad \text{in $ \Hoeld{m}[\alpha] $-norm}.
		\]
		Assume further that on $ \mathcal{O}' $ the converse equalities 
			$ f_n\circ g_n = \mathit{id}_{\mathcal{O}'} $ and $ f\circ g = \mathit{id}_{\mathcal{O}'} $ hold.
		
		Then the inverses $ g_n $ converge to $ g $ in $ \Hoeld{m} $-norm on $ \mathcal{O}' $. 	
\end{lemma}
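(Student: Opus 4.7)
The plan is to apply the composition difference estimate \cref{eq:diffCompEstimate} with the outer function $g_n$. The starting point is the algebraic identity, valid for every $y\in\mathcal{O}'$,
\[
g_n(y) - g(y) = g_n\bigl(f(g(y))\bigr) - g_n\bigl(f_n(g(y))\bigr),
\]
which follows from $y = f(g(y))$ (right inverse on $\mathcal{O}'$) and $g(y) = g_n(f_n(g(y)))$ (left inverse on $\Omega$). Setting $u \coloneqq f\circ g = \mathit{id}_{\mathcal{O}'}$ and $v \coloneqq f_n\circ g$, this reads $g_n - g = g_n \circ u - g_n \circ v$, and \cref{eq:diffCompEstimate} yields
\[
\|g_n - g\|_{\Hoeld{m}} \leq C\, \|g_n\|_{\Hoeld{m}[\alpha]} \bigl(1 + \|u\|_{\Hoeld{m}[\alpha]} + \|v\|_{\Hoeld{m}[\alpha]}\bigr) \bigl(\|u-v\|_{\Hoeld{0}}^\alpha + \|u-v\|_{\Hoeld{m}[\alpha]}\bigr).
\]

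The factor $\|u-v\|_{\Hoeld{m}[\alpha]}$ drives everything to zero: since $u - v = (f - f_n)\circ g$, the composition estimate \cref{eq:compEstimate} together with $f_n \to f$ in $\Hoeld{m}[\alpha]$ gives $\|u-v\|_{\Hoeld{m}[\alpha]} \to 0$, and the sup-norm contribution is handled the same way. The norms $\|u\|_{\Hoeld{m}[\alpha]}$ and $\|v\|_{\Hoeld{m}[\alpha]}$ are automatically bounded uniformly in $n$, because $u$ is the identity and $v = f_n \circ g$ with both $\|f_n\|_{\Hoeld{m}[\alpha]}$ and $\|g\|_{\Hoeld{m}[\alpha]}$ bounded.

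The one remaining point is a uniform bound on $\|g_n\|_{\Hoeld{m}[\alpha]}$, and this is where I expect the main work to sit. The natural route is via the inverse estimate \cref{eq:inverseEstimate}, which yields $\|g_n\|_{\Hoeld{m}[\alpha]} \leq C \|f_n\|_{\Hoeld{m}[\alpha]}$ with a constant that depends on $\|f_n\|_{\Hoeld{1}}$ and $\|g_n\|_{\Hoeld{1}}$. The first is under control because $f_n \to f$ in the stronger norm; the second -- a uniform $\Hoeld{1}$-bound on the family $\{g_n\}$ -- is the delicate ingredient. I would extract it from the chain-rule identity $Dg_n = (Df_n)^{-1}\circ g_n$, which reduces matters to a uniform lower bound on $|\det Df_n|$ on the compact closure of $\Omega$: since $f$ has a $\Hoeld{1}$-inverse, $|\det Df|$ is bounded below by a positive constant there, and the uniform convergence $Df_n \to Df$ propagates this lower bound to $Df_n$ for all sufficiently large $n$. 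Threading this uniform $\Hoeld{1}$-control through \cref{eq:inverseEstimate} closes the argument and yields $\|g_n - g\|_{\Hoeld{m}} \to 0$ on $\mathcal{O}'$.
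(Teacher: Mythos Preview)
Your proposal is correct and follows essentially the same route as the paper: the same algebraic identity $g_n-g=g_n\circ(f\circ g)-g_n\circ(f_n\circ g)$ followed by the composition difference estimate \cref{eq:diffCompEstimate} with $u=f\circ g$ and $v=f_n\circ g$. You are in fact more careful than the paper on the one nontrivial point: the paper simply posits a uniform $\Hoeld{1}$-bound on the $g_n$ at the outset (and writes $\|g\|_{\Hoeld{m}[\alpha]}$ where $\|g_n\|_{\Hoeld{m}[\alpha]}$ is what \cref{eq:diffCompEstimate} actually produces), whereas you supply an explicit justification via $Dg_n=(Df_n)^{-1}\circ g_n$, a uniform lower bound on $|\det Df_n|$, and then \cref{eq:inverseEstimate}.
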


\begin{proof}
	Let $ c \geq 0 $ such that $ \|g_n\|_{\Hoeld{1}}, \|g\|_{\Hoeld{1}}, \|f_n\|_{\Hoeld{1}}, \|f\|_{\Hoeld{1}} \leq c $ for all $n$.
	By abuse of notation we write $ g_n $ and $ g $ for the restrictions $ g_n|_{\mathcal{O}'} $ and $ g|_{\mathcal{O}'} $.
	We estimate on the domain $\mathcal{O}'$
	\begin{align*}
			\| g_n - g \|_{\Hoeld{m}}
		&=
			\|g_n \circ f \circ g - g_n \circ f_n \circ g \|_{\Hoeld{m}}
	\\
	\shortintertext{apply \cref{eq:diffCompEstimate} to $ u = f \circ g $ and $ v = f_n \circ g $}
		&\leq
			\begin{multlined}[t]
			C 
				\|g\|_{\Hoeld{m}[\alpha]}
				\left( 1 + \|f \circ g\|_{\Hoeld{m}[\alpha]} + \|f_n \circ g\|_{\Hoeld{m}[\alpha]} \right)
			\\
				\cdot \big(\|(f-f_n)\circ g\|_{\Hoeld{0}}^\alpha + \|(f-f_n)\circ g\|_{\Hoeld{m}[\alpha]}\big).
			\end{multlined}
	\end{align*}
	The first factors are bounded by assumption.
	The second factor is bounded by assumptions and the composition estimate \cref{eq:compEstimate}.
	The last factor converges to 0 as $ n \to \infty $ due to \cref{eq:compEstimate}
	and the assumption that $ f_n \to f $ in $ \Hoeld{m}[\alpha] $-norm.
\end{proof}


\begin{proposition}\label{prop:connIndep}
	For a finite index set $I$ let $ \{\chart_i\colon U_i \to M\}_{i\in I}$ be a $\Hoeld{2}[\alpha]$-atlas of a closed oriented manifold $ M $ with $\alpha > 1$ and $ \{\partUnity_i\}_{i\in I}  $ a corresponding partition of unity on $ M $ such that $ \overline{\supp \partUnity_i} \subset \chart_i(U_i) $.
	Choose some invariant polynomial $ \Pi $.
	
	Then the value
	\[ 
			\int_M \Pi (\Riem( \{\chart_i\}, \{\partUnity_i\} ))[M]
		\coloneqq 
			\sum_i \int_{U_i } \partUnity_i \Pi(\prescript{\chart_i}{}\Riem^{\blank}_{\triBlank}) \diff x, 
	\]
	coincides with $ \Pi[M]$ as defined in \cref{eq:characteristicNumb}.
\end{proposition}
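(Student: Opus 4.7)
The plan is to reduce to the smooth case by approximation. The key observation is that formula~\eqref{eq:curvature_connection} is a polynomial expression in the partition-of-unity functions, their first derivatives, the Jacobians of the transition maps $x^i = \chart^{-1}\circ\chart_i$ and $y^i = \chart_i^{-1}\circ\chart$, their second derivatives, and compositions of the form $x^i_{k,l}\circ y^i$. Using the estimates \cref{eq:prodEstimate,eq:compEstimate,eq:diffCompEstimate,eq:reciprocalEstimate,eq:inverseEstimate} together with Lemma~\ref{lem:inverseConv}, the integrand $\Pi(\Riem(\{\chart_i\},\{\partUnity_i\}))$ depends continuously in $\Hoeld{0}$-norm on the pair $(\{\chart_i\},\{\partUnity_i\})\in \Hoeld{2}[\alpha]\times\Hoeld{1}[\alpha]$, where the $\chart_i$ are varied keeping the overlap combinatorics fixed. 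By uniform control on the supports, the integral
\[
    \sum_i\int_{U_i}\partUnity_i\,\Pi(\prescript{\chart_i}{}\Riem^{\blank}_{\triBlank})\diff x
\]
inherits this continuity.

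Next, I would approximate the given data by a smooth one. Using the smooth structure on $M$ singled out by the Whitney theorem cited after \eqref{eq:transition_estimate}, express each $\chart_i$ through a fixed smooth background chart and mollify to obtain smooth $\chart_i^{(n)}\to \chart_i$ in $\Hoeld{2}[\beta]$ for any $\beta<\alpha$; after shrinking the domains slightly, the $\chart_i^{(n)}$ remain diffeomorphisms onto their images, preserve the same overlap combinatorics, and satisfy $\overline{\supp \partUnity_i}\subset \chart_i^{(n)}(U_i)$. Mollify and renormalize the $\partUnity_i$ likewise to obtain a smooth partition of unity $\{\partUnity_i^{(n)}\}$ subordinate to the smoothed atlas with $\partUnity_i^{(n)}\to\partUnity_i$ in $\Hoeld{1}[\beta]$.

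For each smooth $n$, $\nabla_{\{\chart_i^{(n)}\},\{\partUnity_i^{(n)}\}}$ defined by \eqref{eq:pieceEuclConnection} is a genuine smooth linear connection on the smooth tangent bundle $TM$, and the textbook derivation that led from \eqref{eq:RiemannianCurvature} to \eqref{eq:curvature_connection} is now fully justified: the third-derivative term vanishes identically, so \eqref{eq:curvature_connection} equals the honest Riemann curvature tensor of that connection. Standard Chern-Weil theory, as recalled in \cref{ssec:charNumb}, therefore gives
\[
    \sum_i\int_{U_i}\partUnity_i^{(n)}\,\Pi\bigl(\prescript{\chart_i^{(n)}}{}\Riem^{\blank}_{\triBlank}\bigr)\diff x
    = \phi(\Pi)[M] = \Pi[M]
\]
for every $n$. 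Letting $n\to\infty$ and applying the continuity established in the first step yields the proposition.

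The main obstacle is the bookkeeping in the approximation step: one has to ensure that the smoothed charts and the smoothed partition of unity still form compatible data (same overlaps, correct support inclusions, transition maps well defined and converging in $\Hoeld{2}[\beta]$, inverses converging in $\Hoeld{2}$ via Lemma~\ref{lem:inverseConv}), and one has to verify uniform bounds on the resulting sequence of integrands so that dominated convergence applies. Once that is in place, the rest is a mechanical application of the Hölder calculus of \cref{ssec:functionNorms} and the classical connection-independence of Chern-Weil classes.
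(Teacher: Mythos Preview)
Your proposal is correct and follows essentially the same strategy as the paper's proof: mollify the rough atlas and partition of unity to smooth data, invoke classical Chern--Weil theory for the resulting smooth piecewise Euclidean connection, and then pass to the limit using the H\"older calculus (including Lemma~\ref{lem:inverseConv} for the inverses) together with dominated convergence. The only organizational difference is that the paper carries out the mollification via an auxiliary finite cover of each $\chart_i(U_i)$ by fixed smooth charts $\chart_{ij}$ and uses Lemma~\ref{lem:coordinateIndep} to transfer the integral to those fixed charts before taking $\delta\to 0$, whereas you keep the computation in the varying charts $\chart_i^{(n)}$ themselves; both routes lead to the same limiting argument.
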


\begin{proof}
It is a standard result that the lemma holds for smooth connections \cite[23.5]{Tu17}, namely even the entire de Rham class of the form $ \varphi(\Pi) $ is independent of the connection.
This result extends using mollification:

Being a manifold, $ M $ is metrizable. Hence we can assume that $ M $ carries some metric $ d $.
Let $ \Dist[H] $ denote the Hausdorff distance with respect to $ d $ between closed subsets of $M$.
Set $V_i \coloneqq \chart_i(U_i)$.
For any chart $ \chart_i $ choose some $ \varepsilon_i < \Dist[H](\supp \partUnity_i, \partial V_i ) $.
For any chart $ \chart_i\colon U_i \to V_i \subset M $ we can define a mollification by choosing a finite cover $ V_{ij} $ of $ \overline{V_i} $
	corresponding to a finite set of \textit{smooth} charts $ \{\chart_{ij}\colon U_{ij} \to V_{ij} \subset M\}_{j \in J_i} $ for $ V_i $.
Moreover we choose
	corresponding \textit{smooth} functions $ \{\partUnity_{ij}\colon M\to [0,1]\}_{j \in J_i}  $ that are a partition on unity for the $\varepsilon_i$-thickening of $ \supp \partUnity_i $,
	i.e.\ for all $i$ we require
\begin{align}
\nonumber
		\supp \partUnity_{ij} 
	&\subset V_{ij}	&& \text{for all } j,
\\
\nonumber
	\bigcup\nolimits_j \supp \partUnity_{ij} &\subset V_i,
&&\text{and}  
\\
\label{eq:subPartUnity}
		\supp (\partUnity_i)^{\varepsilon_i)} 
	&\subset 
		\left(\sum\nolimits_j \partUnity_{ij}\right)^{-1}(\{1\})
\end{align}
where $
		A^{\varepsilon)}
	\coloneqq
		\setBuilder{x \in M}{ \Dist(x, A) < \varepsilon }
$ for $A \subset M$.
On each chart $ \chart_{ij} $ we mollify the compactly supported function $ 
		(\partUnity_{ij} \circ \chart_{ij}) \cdot  (\chart_i^{-1} \circ \chart_{ij} )
	\colon 
		U_{ij} \to \R^d 
$ by convolution on the chart $ \chart_{ij} $ with a function $ \phi_\delta $ which has support in $ \Closedball{0}{\delta} $ and satisfies $ \int \phi_\delta \diff x = 1 $,
see \cite[Lemma~1.2.3]{Hormander03} for existence.
The resulting function 
\begin{align*}
		\chartVar_{ij}^\delta(x) 
	&\coloneqq 
		\phi_\delta * \left(  (\partUnity_{ij} \chart_i^{-1} ) \circ \chart_{ij} \right)(x)
\\
	&=
		\int_{\R^d} 
					\phi_\delta (\xi - x) 
			\cdot 	\partUnity_{ij} \circ \chart_{ij}(\xi)
			\cdot 	\chart_i^{-1} \circ \chart_{ij}(\xi)
		\diff \xi 
\\
\intertext{has support in $ V_{ij} $ for $ \delta < \delta_{ij} \coloneqq \Dist[H](\supp \partUnity_{ij}, \setBd U_{ij}) \neq 0 $.
		Summing up we define for each $i\in I$ a function $ M \to \R^d $}
		\chartVar_{i}^\delta(x) 
	&\coloneqq
		\sum\nolimits_j \chartVar_{ij}^\delta \circ \chart_{ij}^{-1} (x)
\end{align*}
which is well-defined for $ \delta < \delta_i \coloneqq \min_j \delta_{ij} \neq 0 $.
Moreover this function has the property that for sufficiently small $ \delta $ the support is contained in $ V_i $.
Finally, we observe that by basic mollification results the functions 
	$ \chartVar_{ij}^\delta $ are smooth and $ \Hoeld{2}[\alpha] $-converge to $ \chart_i^{-1}\circ \chart_{ij} $ as $ \delta\to 0 $.
Therefore, for each $i\in I$, $ \chartVar_{i}^\delta $ converges in $ \Hoeld{2} $ to $ \chart_i^{-1} \cdot \sum\nolimits_j \partUnity_{ij} $ as $ \delta\to 0 $, meaning that for any smooth chart $\chart$ the sequence $\chartVar_{i}^\delta \circ \chart$ converges in $ \Hoeld{2}[\alpha] $ to $ \chart_i^{-1} \circ\chart \cdot \sum\nolimits_j \partUnity_{ij} \circ\chart $.
Thus, by \cref{eq:subPartUnity}, $ \chartVar_{i}^\delta $ converges in $ \Hoeld{2}[\alpha] $ to $ \chart_i^{-1} $ on the support of $\partUnity_i$ as $ \delta\to 0 $.
Hence, due to Lemma~\ref{lem:inverseConv}, for any chart $ \varphi $ the sequence $ \chart \circ (\chartVar_{i}^\delta)^{-1}$ converges in $ \Hoeld{2} $ to $ \chart_i$ on $ \bigcap_{\delta > 0} \chartVar_{i}^\delta (\supp \partUnity_i) $ as $ \delta \to 0 $.

In a similar fashion we define, for sufficiently small $ \delta $, the mollification of the partition of unity $ \{\partUnity_i\} $
\begin{align*}
		\partUnityVar_{ij}^\delta(x) 
	&\coloneqq 
		\phi_\delta * \left( (\partUnity_{ij} \partUnity_i) \circ \chart_{ij} \right) (x)
	=
		\int_{\R^d} \phi_\delta (\xi - x) \cdot \partUnity_{ij}(\chart_{ij}(\xi)) \diff \xi \\
	\tilde{\partUnityVar}_{i}^\delta(x) &\coloneqq
		\sum_{j\in I} \partUnityVar_{ij}^\delta( \chart_{ij}^{-1} (x)).
\\
\intertext{To preserve the partition of unity property we normalize with respect index $ i $, for all $i$ we set}
		\partUnityVar_{i}^\delta(x) 
	&\coloneqq
		\frac{\tilde{\partUnityVar}_{i}^\delta(x)}{\sum\nolimits_{j\in I} \tilde{\partUnityVar}_{i}^\delta(x)}
\end{align*}
and observe that $ \partUnityVar_{i}^\delta $ converges in $ \Hoeld{2} $ to $ \partUnity_i \cdot \sum\nolimits_j \partUnity_{ij} = \partUnity_i $ as $ \delta\to 0 $.
For each $ i $ we can choose $ \delta >0 $ sufficiently small so that $ \supp \partUnityVar_{i}^\delta \subset \supp (\partUnity_i)^{\varepsilon_i)} $.

Now we can define the mollified connection:
	Each map $ \chartVar_{i}^\delta $ is a diffeomorphism from $ \bigl(\sum\nolimits_j \partUnity_{ij}\bigr)^{-1}(\{1\}) $ 
		onto a subset of $ \R^d $ and hence diffeomorphism on $ \supp \partUnityVar_i^\delta $.
Thus we can define a smooth connection on $ M $ given by
\begin{equation}\label{eq:mollifiedConnection}
				\nabla^\delta 
	\coloneqq 	\nabla_{ \{\chartVar_i\}, \{\partUnityVar_i\} }
	=			\sum_{i\in I} \partUnityVar_i^\delta (\chartVar_i^\delta)^* \nabla_{\textnormal{Eucl.}}
\end{equation}
that is well-defined for $\delta < \min_i \delta_i $.
Let $ \Riem_{\nabla^\delta} $ denote the curvature tensor induced by this connection according to \cref{def:randomCurvature} and let $ \prescript{\chart}{\delta}\Riem^\blank_{\triBlank} $ denote the coordinate representation of this connection in a chart $\chart$ according to \cref{eq:curvature_connection}.
In this formula the roles of the $\chart_i$'s and $\partUnity_i$'s is now taken over by the $(\chart_i^\delta)^{-1}$'s and $\partUnityVar_i^\delta$'s obtaining
\begin{equation*}
	\prescript{\chart}{\delta}\Riem^k_{\lambda\mu\nu} =
 		\bigg(
 		\sum_{\kappa}
 		 \prescript{\chart}{\delta}{\Gamma}^k_{\mu\kappa}\prescript{\chart}{\delta}{\Gamma}^\kappa_{\nu\lambda}
 		 +
 		\sum_{i\in I} 
 				\partUnityVar^\delta_{i, \mu} \prescript{i}{\delta}{\Gamma}_{\nu \lambda}^k
 			+	\partUnityVar^\delta_i \sum_{l} 
 				(\prescript{\delta}{}{x}^i_{k, l} \circ \prescript{\delta}{}{y}^i )_{,\mu} \prescript{\delta}{}{y}^i_{l, \nu\lambda}
 		\bigg)_{[\mu\nu]}
\end{equation*}
where
\begin{align*}
		\prescript{i}{\delta}{\Gamma}_{\mu \nu}^k 
	&\coloneqq 
		\sum_{l} (\prescript{\delta}{}{x}^i_{k,l} \circ \prescript{\delta}{}{y}^i)	(\prescript{\delta}{}{y}^i_{l,\mu \nu})
,&
		\prescript{\chart}{\delta}{\Gamma}^k_{\mu\kappa}
	&\coloneqq
		\sum_{i\in I} \partUnityVar_i \prescript{i}{\delta}{\Gamma}_{\mu \nu}^k
,
\\
	\prescript{\delta}{}{x}^i &\coloneqq \chart^{-1} \circ (\chartVar^\delta_i)^{-1}	
,&
	\prescript{\delta}{}{y}^i &\coloneqq \chartVar_i^\delta \circ \chart
\text{,}
\end{align*}
and---by abuse of notation---$ \partUnityVar^\delta_i \coloneqq \partUnityVar^\delta_i \circ \chart $.
Since \cref{eq:curvature_connection} involves only second derivatives, the $\Hoeld{2}$-convergences
$ \prescript{\delta}{}{x}^i \to x^i = \chart^{-1} \circ \chart_i $ on $\chart_i(\supp \partUnity_i)$,
$ \prescript{\delta}{}{y}^i \to x^i = \chart_i^{-1} \circ \chart $ on $\chart (\supp \partUnity_i)$,
and
$\partUnityVar_{i}^\delta \to \partUnity_i $ as $\delta \to 0$ proved above
imply by \cref{eq:prodEstimate,eq:reciprocalEstimate,eq:compEstimate,eq:inverseEstimate,eq:diffCompEstimate}
\begin{equation}\label{eq:curvatureSmoothing}
	\prescript{\chart}{\delta}\Riem^{\blank}_{\triBlank}
	\xrightarrow{\delta\to 0}
	\prescript{\chart}{}\Riem^{\blank}_{\triBlank}
\qquad
	\text{in	$\Hoeld{0}$.}
\end{equation}

Now we can check the claim on characteristic numbers.
By coordinate independence, i.e. Lemma~\ref{lem:coordinateIndep}, we can calculate $\int_M \Pi (\Riem ( \{\chart_i\}, \{\partUnity_i\} ) ) $ as
\begin{align*}
		\int_M \Pi (\Riem ( \{\chart_i\}, \{\partUnity_i\} )) 
	&= 
		\adjustlimits{\sum}_{i} {\sum}_{j\in J_i} \int_{U_{ij} }
			((\partUnity_i \partUnity_{ij})  \circ \chart_{ij}) 
			\cdot 
			\Pi(\prescript{\chart_{ij}}{}{\Riem}^\blank_{\triBlank}) 
		\diff x 
\shortintertext{by \cref{eq:curvatureSmoothing}}
	&=
		\adjustlimits{\sum}_{i} {\sum}_{j\in J_i} \int_{U_{ij} }
			((\partUnity_i \partUnity_{ij})  \circ \chart_{ij}) 
			\cdot 
			\Pi\left( \lim_{\delta\to 0} \prescript{\chart_{ij}}{\delta}\Riem^\blank_{\triBlank} \right) 
		\diff x 
\shortintertext{by dominated convergence theorem}
	&= 
		\lim_{\delta\to 0} \adjustlimits{\sum}_{i} {\sum}_{j\in J_i} \int_{U_{ij} } \int_{U_{ij} }
			((\partUnity_i \partUnity_{ij})  \circ \chart_{ij})  
			\cdot 
			\Pi\left( \prescript{\chart_{ij}}{\delta}\Riem^\blank_{\triBlank}\right) 
		\diff x 
\\
	&= \lim_{\delta\to 0} \int_M \Pi(\Riem_{\nabla^\delta})
\text{.}
\end{align*}
Since $ \int_M \Pi(\Riem_{\nabla^\delta}) $ is independent of $\nabla^\delta$, this proves that $ \int_M \Pi (\Riem( \{\chart_i\}, \{\partUnity_i\} ) ) $ equals the actually characteristic number of $ M $ with respect to $ \Pi $ and is therefore independent of the choice of the atlas $ \{U_i\} $.
\end{proof}

\begin{remark}
	The proof of Proposition~\ref{prop:connIndep} also shows that the entire characteristic class has a representative of uniformly bounded $\Hoeld{2}[\alpha]$-regularity in the sense that there is a sequence of representatives of $\phi(\Pi)$ such that in some charts with harmonic chart norm on the scale of $r$ bounded by $Q$ the sequence converges in $ \Hoeld{0} $ to the rough $\Hoeld{}[\alpha]$ form defined by $\Pi$ and the locally Euclidean connection.
	By a rough $\Hoeld{}[\alpha]$ form we mean a section in the exterior algebra that is $\Hoeld{}[\alpha]$-regular with respect to the Euclidean identification in the chart.
\end{remark}

\appendix

\section{Coordinate independence}
	\label{sec:coordIndep}

We check that the curvature defined in the notation of \cref{sec:piecewiseEuclConn} by \ref{eq:curvature_connection} which can be stated as
\begin{equation}\label{eq:curvature_connection_var}
		\prescript{\chart}{}\Riem^k_{\lambda\mu\nu}
	= 
 		\sum_{i\in I} \partUnity_i \sum_{l} 
 			\left((x^i_{k, l} \circ y^i )_{,\mu} y^i_{l, \nu\lambda}\right)_{[\mu\nu]}
 		+ \left(\partUnity_{i, \mu} \prescript{i}{}{\Gamma}_{\nu \lambda}^k\right)_{[\mu\nu]}
 		+ \sum_{\kappa}
\left(\prescript{\chart}{}{\Gamma}^k_{\mu\kappa}\prescript{\chart}{}{\Gamma}^\kappa_{\nu\lambda}\right)_{[\mu\nu]}
\end{equation}
is actually coordinate independent.
The standard textbook calculation could be referred to if a third derivative of transition maps would exists.

\coordinateIndep*

\begin{proof}
	We repeat all crucial definitions in the primed and non-primed versions
	\begin{align*}
			x 
		&= 
			\chart^{-1} \circ \chart', 
	& 	
			x' 
		&= 
			\chart'^{-1} \circ \chart,  
	\\
			x^i 
		&= 
			\chart^{-1} \circ \chart_i, 
	& 	
			x'^i 
		&= 
			\chart'^{-1} \circ \chart_i, 
	\\
			y^i 	
		&= 
			\chart_i^{-1} \circ \chart,
	&		y^{\prime i} 	
		&= 
			\chart_i^{-1} \circ \chart',
	\\
			\prescript{i}{}{\Gamma}_{\mu \nu}^k
		&=
			\sum_{l} (x^i_{k,l} \circ y^i)	(y^i_{l,\mu \nu}),
	&		
			\prescript{i}{}{\Gamma}_{\mu \nu}^{\prime k}
		&=
			\sum_{l} (x^{\prime i}_{k,l} \circ y^{\prime i})	(y^{\prime i}_{l,\mu \nu}),
	\\
			\prescript{\chart}{}{\Gamma}_{\mu \nu}^k
		&=  
			\sum_{i\in I} \partUnity_i \prescript{i}{}{\Gamma}_{\mu \nu}^k, 
	&	
			\prescript{\chart'}{}{\Gamma}_{\mu \nu}^k
		&=  
			\sum_{i\in I} \partUnity_i \prescript{i}{}{\Gamma}_{\mu \nu}^{\prime k}.			
	\end{align*}
	defined on the suitable domains and the abuse of notation $\partUnity_i = \partUnity_i \circ \chart $.
	Additionally, we introduce the shorthand
	\[ 
		(h_{\mu\mu'\nu\nu'})_{[\mu\nu]'} \coloneqq h_{\mu\mu'\nu\nu'} - h_{\nu\nu'\mu\mu'}
	\text{.} 
	\]
	Observe that in case $ h_{\mu\mu'\nu\nu'} = f_{\mu\mu'} \cdot g_{\nu\nu'} $
	\begin{align}
	\nonumber
			\sum\nolimits_{\mu'\nu'} \left(h_{\mu'\mu'\nu'\nu'}\right)_{[\mu\nu]'}
		&=
				\Bigl( \sum\nolimits_{\mu'} f_{\mu\mu'} \Bigr) 
				\Bigl( \sum\nolimits_{\nu'} g_{\nu\nu'} \Bigr) 
			- 
				\Bigl( \sum\nolimits_{\nu'} f_{\nu\nu'} \Bigr) 
				\Bigl( \sum\nolimits_{\mu'} g_{\mu\mu'} \Bigr) 
	\\
	\label{eq:eliminatePrimeBracket}
		&=
			\Bigl(\sum\nolimits_{\mu'} f_{\mu\mu'} \sum\nolimits_{\nu'} g_{\nu\nu'} \Bigr)_{[\mu\nu]}.
	\end{align}
	
	We have $ x' \coloneqq \chart'^{-1} \circ \chart = x'^i \circ y^i $ and
		$ x \coloneqq \chart^{-1} \circ \chart' = x^i \circ y'^i $ where defined.
	Observe that for the differential $ D\mathit{id}\colon \R^d \to \R^d $ of the identity the following identity holds
	\begin{align}
			0 	
		&= 
			(D\mathit{id})_{,\lambda}
		= 
			(\operatorname{D} x\circ x' )_{,\lambda} 
		= 
			\left( 
				\left( \sum\nolimits_l x_{k,l} \circ x' \cdot x'_{l,\nu} \right)_{k\nu} 
			\right)_{,\lambda} 
	\nonumber
	\\
	\label{eq:eliminateDbDerivative}
		&= 
			\sum\nolimits_l \left( 
					(x_{k,l} \circ x')_{,\lambda} x'_{l,\nu} 
				+ 
					(x_{k,l} \circ x') x'_{l,\nu\lambda} 
			\right)_{k\nu}.
	\end{align}
	
	Finally, we prove the lemma. Fix any $ \lambda, \mu, \nu, k \in \{1,\ldots, \Dim\} $ and observe
\small
	\begin{align*}
			\MoveEqLeft[0]{ \sum_{\lambda', \mu', \nu', k'}
				(\prescript{\chart^{\mathrlap{\prime}}}{}{R}^{k'}_{\lambda'\mu'\nu'} 
					{}\circ x') x'_{\lambda', \lambda} x'_{\mu', \mu} x'_{\nu', \nu} (x_{k, k'} \circ x') } 
	\shortintertext{plug in definition \cref{eq:curvature_connection_var} for $ \prescript{\chart^{\mathrlap{\prime}}}{}{R}^{k'}_{\lambda'\mu'\nu'} $ and use the abuse of notation $\partUnity_{i, \mu'} = (\partUnity_i \circ \chart')_{,\mu} $}
		&= 
			\begin{multlined}[t]
				\sum_{i\in I}
				\partUnity_i
				\sum_{\lambda', k', l'}
				\sum_{\mu', \nu'}
				\left(
						\big((x^{\prime i}_{k', l'} \circ 
				 		y^{\prime i} )_{,\mu'} \circ x'\big) x'_{\mu', \mu}
				 		(y^{\prime i}_{l', \nu'\lambda'} \circ x') x'_{\lambda', \lambda}
					 	x'_{\nu', \nu} (x_{k, k'} \circ x') 
				\right)_{[\mu\nu]'}
			\\
				+ 
					\sum_{ i\in I }
					\sum_{\lambda', k'}
					\sum_{\mu', \nu'}
					\bigl(
						\partUnity_{i, \mu'} x'_{\mu', \mu}
						(\prescript{i}{}{\Gamma}_{\nu' \lambda'}' \circ x')^{k'}
						x'_{\lambda', \lambda} x'_{\nu', \nu} (x_{k, k'} \circ x')
					\bigr)_{[\mu\nu]'}
			\\  
				+ 
					\sum_{\lambda', k', \kappa'}
					\sum_{\mu', \nu'}
					\left(
						\prescript{\chart'}{}{\Gamma}^{k'}_{\mu'\kappa'}
						\prescript{\chart'}{}{\Gamma}^{\kappa'}_{\nu'\lambda'}
						x'_{\lambda', \lambda} x'_{\mu', \mu} x'_{\nu', \nu} (x_{k, k'} 
						\circ x') 
					\right)_{[\mu\nu]'}
			\end{multlined} 
	\shortintertext{apply formula \cref{eq:eliminatePrimeBracket} to each summand $\sum_{\mu', \nu'}(\ldots)_{[\mu\nu]'}$}
		&= 
			\begin{multlined}[t]
				\sum_{ i\in I } 
					\sum_{\lambda', k', l'}
					\partUnity_i 
	 				\Bigl(
		 				\textstyle
					 	\overbrace{\textstyle
					 		\sum\limits_{\mu'} ((x^{\prime i}_{k', l'} \circ 
				 		y^{\prime i} )_{,\mu'} \circ x') x'_{\mu', \mu}
				 		}^{\text{change of variables}}
					 	\cdot\sum\limits_{\nu'}
					 	\underbrace{
					 		(y^{\prime i}_{l', \nu'\lambda'} \circ x') x'_{\lambda', \lambda}
					 	}_{\text{change of variables}}
					 	x'_{\nu', \nu} (x_{k, k'} \circ x') 
				\Bigr)_{[\mu\nu]}
			\\
				+ 
					\sum_{ i\in I } \sum_{\lambda', k'}
					\Bigl(
						\underbrace{\textstyle
							\sum_{\mu'}
							\partUnity_{i, \mu'} x'_{\mu', \mu}
						}_{\text{change of variables}}
						\underbrace{\textstyle
							\sum_{\nu'}
							(\prescript{i}{}{\Gamma}_{\nu' \lambda'}' \circ x')^{k'}
							x'_{\lambda', \lambda} x'_{\nu', \nu} (x_{k, k'} \circ x')
						}_{\text{apply formula \cref{eq:connectionTrafo}}}
					\Bigr)_{[\mu\nu]}
			\\  
				+ 
					\sum_{\lambda', k', \kappa'}
					\Big(\textstyle
						\sum\limits_{\mu'}
						\prescript{\chart'}{}{\Gamma}^{k'}_{\mu'\kappa'}
						x'_{\mu', \mu}
						(x_{k, k'} \circ x')
						\sum\limits_{\nu'}
						\prescript{\chart'}{}{\Gamma}^{\kappa'}_{\nu'\lambda'}
						x'_{\lambda', \lambda}  
						x'_{\nu', \nu}  
					\Big)_{[\mu\nu]}
			\end{multlined} 
	\\
		&= 
			\begin{multlined}[t]
				\sum_{ i\in I } \sum_{k', l'}
						\partUnity_i 
					 \Bigl(\textstyle
					 	(
					 		\underbrace{x^{\prime i}_{k', l'}}_{\mathclap{=(x'\circ x^i)_{k',l'}} }  
					 		{}\circ {}
					 		\underbrace{y^{\prime i} \circ x'}_{= y^i}  
					 	)_{,\mu}
					 	\sum_{\nu'}
					 	(
					 		\underbrace{y^{\prime i}_{l', \nu'} }_{\mathclap{=(y^i \circ x)_{l',\nu'}}} 
					 		{}\circ 
					 		x'
					 	)_{, \lambda}
					 	x'_{\nu', \nu} (x_{k, k'} \circ x') 
					 \Bigr)_{[\mu\nu]} \\
			+ 	\sum_{i\in I}
				\Bigl(\textstyle
					\partUnity_{i, \mu} 
					\bigl(
						\prescript{i}{}{\Gamma}_{\nu \lambda}^k
						- \sum_{l} (x_{k,l} \circ x')	(x'_{l,\nu \lambda})
					\bigr) 
				\Bigr)_{[\mu\nu]} \\
			+ 
				\sum_{\lambda', k', \kappa', \mu', \nu', m} \Bigl(
					\prescript{\chart'}{}{\Gamma}^{k'}_{\mu'm}
					x'_{\mu', \mu}
					(x_{k, k'} \circ x') 
					\cdot
					\underbrace{ \delta_{m\kappa'} }_{ (*)} 
					\cdot
					\prescript{\chart'}{}{\Gamma}^{\kappa'}_{\nu'\lambda'}
						x'_{\lambda', \lambda}  
						x'_{\nu', \nu} 
				\Bigr)_{[\mu\nu]}
			\end{multlined} 
	\shortintertext{$(*)$: $ 
		\delta_{m\kappa'} 
		= \delta_{m\kappa'} \circ x'
		= (D \mathit{id} )_{m\kappa'}  \circ x' 
		= (D (x'\circ x))_{m\kappa'}   \circ x'
		= (\sum_{ \kappa} x'_{m, \kappa} \circ x \cdot x_{\kappa, \kappa'} ) \circ x'
		= \sum_\kappa  x'_{m, \kappa} (x_{\kappa, \kappa'} \circ x')
	$}
		&= 
			\begin{multlined}[t]
				\smashoperator{\sum_{\substack{i\in I \\ k', l' }}} 
						\partUnity_i 
					 \Biggl(\begin{multlined}[c][8cm]\textstyle
					 		\bigl(\bigl( \sum_\kappa     x'_{k', \kappa}   \circ x^i \cdot x^i_{\kappa,l'} \bigr) \circ y^i  \bigr){}_{,\mu}
					 	\\ \textstyle \cdot
					 		\sum_{\nu'}
					 		\bigl(\bigl( \sum_{l} y^i_{l', l}\circ x   \cdot x_{l, \nu'}\bigr) \circ x'\bigr){}_{, \lambda}
					 	\textstyle \cdot
					 		x'_{\nu', \nu} (x_{k, k'} \circ x') 
					 \end{multlined}
					\Biggr)_{[\mu\nu]} 
			\\
			+ 	
				\sum_{i\in I}
				\bigl(
					\partUnity_{i, \mu}
						\prescript{i}{}{\Gamma}_{\nu \lambda}^k
				\bigr)_{[\mu\nu]}
					- \sum_{l} \left(\textstyle
						\left(\sum_{i\in I} \partUnity_{i}\right)_{, \mu} 
							(x_{k,l} \circ x')	(x'_{l,\nu \lambda}) 
						\right)_{[\mu\nu]} \\
			+ 
				\sum_\kappa\Biggl(
					\underbrace{\textstyle \sum\limits_{k', \mu', m}
						\prescript{\chart'}{}{\Gamma}^{k'}_{\mu'm} 
							x'_{\mu', \mu} (x_{k, k'} \circ x') x'_{m, \kappa}
					}_{\text{apply formula \cref{eq:connectionTrafo}}}
					\underbrace{\textstyle
						\sum\limits_{\lambda', \kappa', \nu'}
						\prescript{\chart'}{}{\Gamma}^{\kappa'}_{\nu'\lambda'} 
							x'_{\lambda', \lambda}  x'_{\nu', \nu} (x_{\kappa, \kappa'} \circ x')
					}_{\text{apply formula \cref{eq:connectionTrafo}}}											 
				\Biggr)_{[\mu\nu]}
			\end{multlined} 
	\\
		&= 
			\begin{multlined}[t]
				\smashoperator{\sum_{\substack{i\in I \\ k', l' }}} 
					\partUnity_i 
					\Bigl(  \textstyle 
						\sum_{\kappa}
					 	(  x'_{k', \kappa }    \cdot x^i_{\kappa ,l'} \circ y^i  )_{,\mu}
					 	\sum_{\nu, l}
					 	( y^i_{l', l} \cdot x_{l, \nu'} \circ x')_{, \lambda}
					 	x'_{\nu', \nu} (x_{k, k'} \circ x') 
					 \Bigr)_{[\mu\nu]}
			\\
			+ 	
				\smash{\overbrace{\sum_{i\in I}
					\bigl(
						\partUnity_{i, \mu}
							\prescript{i}{}{\Gamma}_{\nu \lambda}^k\bigr)_{[\mu\nu]}}^{\eqqcolon M}}
						- \sum_{l} \Bigl(
							\smash{\overbrace{1_{, \mu} }^{=0}}
								(x_{k,l} \circ x')	x'_{l,\nu \lambda}
							\Bigr)_{[\mu\nu]} \\
			+ 
				\sum_{ \kappa}
					\Bigl(\textstyle
						\left(
							\prescript{\chart}{}{\Gamma}^k_{\mu\kappa} 
								- \sum_{l} (x_{k,l} \circ x')	x'_{l,\mu \kappa}
						\right)
						\left(
							\prescript{\chart}{}{\Gamma}^\kappa_{\nu\lambda} 
								- \sum_{l} (x_{\kappa,l} \circ x')	x'_{l, \nu \lambda}
						\right)
					\Bigr)_{[\mu\nu]}
			\end{multlined} 
	\\
		&= 
			\begin{multlined}[t]					 
				\smashoperator{\sum_{\substack{i\in I \\ k', l' }}} 
					\partUnity_i 
					\Biggl(\textstyle
				 	\begin{multlined}[c][.7\textwidth]
				 		\textstyle
				 		\sum_{\kappa}
				 		\bigl(  
				 			x'_{k', \kappa \mu } (x^i_{\kappa ,l'} \circ y^i) + x'_{k', \kappa } (x^i_{\kappa ,l'} \circ y^i)_{,\mu} 
				 		\bigr) 
				 	\\
				 		\textstyle
				 		\cdot \sum_{\nu', l}
						\bigl(  
							y^i_{l', l\lambda } (x_{l, \nu'} \circ x') + y^i_{l', l } (x_{l, \nu'} \circ x')_{, \lambda} 
						\bigr)
							 	x'_{\nu', \nu} (x_{k, k'} \circ x') 
				 	\end{multlined}
		 			\Biggr)_{\Helvetica[{[\mu\nu]}]{\mathrlap{[\mu\nu]}}} 
				\\ + M 
				 +
					\sum_{\kappa} \textstyle
					\begin{multlined}[t][7cm]\textstyle
						\bigl(
							\prescript{\chart}{}{\Gamma}^k_{\mu\kappa}   \prescript{\chart}{}{\Gamma}^\kappa_{\nu\lambda}
						+ \sum_{l, l'}
							\prescript{\chart}{}{\Gamma}^k_{\mu\kappa} (x_{\kappa,l} \circ x')	x'_{l, \nu \lambda}
					\\ \textstyle
						 - 	
						\prescript{\chart}{}{\Gamma}^\kappa_{\nu\lambda} (x_{k,l} \circ x')	x'_{l,\mu \kappa}
						+	 (x_{k,l} \circ x')	x'_{l,\mu \kappa}
								\smash{\underbrace{(x_{\kappa,l'} \circ x')	x'_{l', \nu \lambda}}_{
									\text{use formula \cref{eq:eliminateDbDerivative}}
								}}
						\bigr)_{[\mu\nu]}
					\end{multlined}
			\end{multlined} 
	\\[1em]
		&= 
			\begin{multlined}[t]					 
				\smashoperator{\sum_{\substack{i\in I \\ l, \kappa }}} 
					\partUnity_i 
					\Biggl(\textstyle
				 	\begin{multlined}[c][.7\textwidth]
				 		\textstyle
				 		\sum_{k', l'}
				 		\bigl(  
				 			x'_{k', \kappa \mu } (x^i_{\kappa ,l'} \circ y^i) + x'_{k', \kappa } (x^i_{\kappa ,l'} \circ y^i)_{,\mu} 
				 		\bigr) 
				 	\\
				 		\textstyle
				 		\cdot \sum_{\nu}
						\bigl(  
							y^i_{l', l\lambda } (x_{l, \nu'} \circ x') + y^i_{l', l } (x_{l, \nu'} \circ x')_{, \lambda} 
						\bigr)
							 	x'_{\nu', \nu} (x_{k, k'} \circ x') 
				 	\end{multlined}
		 \Biggr)_{[\mu\nu]} 
			\\
				+ 	
					\underbrace{
						\begin{multlined}
						{\textstyle
								M 
							+
								\sum_{\kappa}
								\prescript{\chart}{}{\Gamma}^k_{\mu\kappa}   \prescript{\chart}{}{\Gamma}^\kappa_{\nu\lambda}
							-\sum_{\kappa, l, l'} 
							\prescript{\chart}{}{\Gamma}^k_{\mu\kappa} 
									(x_{\kappa,l} \circ x')	x'_{l, \nu \lambda} }
							\\	+ 
									\prescript{\chart}{}{\Gamma}^\kappa_{\nu\lambda} 
									(x_{k,l} \circ x')	x'_{l,\mu \kappa}
								+	
									(x_{k,l} \circ x')	x'_{l,\mu \kappa} 
									(x_{\kappa,l'} \circ x')_{,\lambda}	x'_{l', \nu}
						\end{multlined}					
					}_{\eqqcolon T }
			\end{multlined}
	\\
		&= 
			\sum_{\substack{i\in I,\\ l, \kappa }}
				\partUnity_i
			 \left(
				 	\begin{gathered}
				 			\textstyle	\sum\limits_{k'}	x'_{k', \kappa \mu }  (x_{k, k'} \circ x') 
				 				\sum\limits_{l'} (x^i_{\kappa ,l'} \circ y^i)	y^i_{l', l\lambda } 
				 				\sum\limits_{\nu'} (x_{l, \nu'} \circ x') x'_{\nu', \nu}  
				 		\\ + \textstyle
				 			\sum\limits_{l'}	(x^i_{\kappa ,l'} \circ y^i)_{,\mu}y^i_{l', l\lambda } 
				 			\sum\limits_{k'}  x'_{k', \kappa } (x_{k, k'} \circ x') 
				 			\sum\limits_{\nu'} (x_{l, \nu'} \circ x')	x'_{\nu', \nu}  
				 		\\ \textstyle	+ 
				 			\sum\limits_{k', \nu'} x'_{k', \kappa \mu }  (x_{l, \nu'} \circ x')_{, \lambda}
				 				x'_{\nu', \nu} (x_{k, k'} \circ x') 
				 			\sum\limits_{l'} (x^i_{\kappa ,l'} \circ y^i)	y^i_{l', l } 
				 		\\ \textstyle	+ 
				 			\sum\limits_{l',\nu'} 
				 				\smash{\underbrace{(x^i_{\kappa ,l'} \circ y^i)_{,\mu} y^i_{l', l }}_{
				 					\text{use formula \cref{eq:eliminateDbDerivative}}}}
				 				\smash{\underbrace{(x_{l, \nu'} \circ x')_{, \lambda}	x'_{\nu', \nu}}_{
				 					\text{use formula \cref{eq:eliminateDbDerivative}}}
				 			\sum\limits_{k'} (x_{k, k'} \circ x') x'_{k', \kappa }}
				 	\end{gathered}
			 \right)_{\mathrlap{[\mu\nu]}} 
			+ 
				T
	\\[.4em]
		&= 
				\sum_{\substack{i\in I \\ l, \kappa }}
						\partUnity_i 
				 \left(\textstyle
					 	\begin{gathered}
					 		\textstyle	\sum_{k'}	x'_{k', \kappa \mu }  (x_{k, k'} \circ x') \prescript{i}{}\Gamma^\kappa_{l\lambda} \delta_{l\nu}  
					 		\textstyle	+ \sum_{l'}	(x^i_{\kappa ,l'} \circ y^i)_{,\mu}y^i_{l', l\lambda } \delta_{\kappa k} \delta_{l\nu}  \\
					 		\textstyle	+ \sum_{k', \nu'} x'_{k', \kappa \mu }  (x_{l, \nu'} \circ x')_{, \lambda}	x'_{\nu', \nu} (x_{k, k'} \circ x') \delta_{\kappa l} \\
					 		\textstyle	+ \sum_{l',\nu'} (x^i_{\kappa ,l'} \circ y^i) y^i_{l', l \mu } (x_{l, \nu'} \circ x')	x'_{\nu', \nu\lambda} \delta_{k \kappa}
					 	\end{gathered}
				 \right)_{\mathrlap{[\mu\nu]}} 
			+ 
				T
	\\
		&=
		\sum_{\substack{i\in I \\ l, \kappa }}
						\partUnity_i 
		 \left(\textstyle
		 	\begin{gathered}
		 		\textstyle	\sum_{k'}	x'_{k', \kappa \mu }  (x_{k, k'} \circ x') \prescript{i}{}\Gamma^\kappa_{\nu\lambda}  
		 		\textstyle	+ \sum_{l'}	(x^i_{k ,l'} \circ y^i)_{,\mu}y^i_{l', \nu\lambda }  \\
		 		\textstyle	+ \sum_{k', \nu'} x'_{k', \kappa \mu }  (x_{\kappa, \nu'} \circ x')_{, \lambda}	x'_{\nu', \nu} (x_{k, k'} \circ x') \\
		 		\textstyle	+ \sum_{\nu'} \prescript{i}{}\Gamma^k_{\mu l} (x_{l, \nu'} \circ x')	x'_{\nu', \nu\lambda}
		 	\end{gathered}
		 \right)_{\mathrlap{[\mu\nu]}} 
				+ 	
					T
	\\
		&= \begin{multlined}[t]
				\sum_{i\in I} \partUnity_i\sum_{l, \kappa }
			 \left( \hspace{-.5cm} 
			 \textstyle
			 	\begin{gathered}
			 		\textstyle	\sum\limits_{k'}	x'_{k', \kappa \mu }  (x_{k, k'} \circ x') \prescript{\chart}{}\Gamma^\kappa_{\nu\lambda}  
			 		\textstyle	+ \sum\limits_{l'}	(x^i_{k ,l'} \circ y^i)_{,\mu}y^i_{l', \nu\lambda }  \\
			 		\textstyle	+ \sum\limits_{k', \nu'} x'_{k', \kappa \mu }  (x_{\kappa, \nu'} \circ x')_{, \lambda}	x'_{\nu', \nu} (x_{k, k'} \circ x') \\
			 		+ \sum\limits_{\nu'} \prescript{\chart}{}\Gamma^k_{\mu l} (x_{l, \nu'} \circ x')	x'_{\nu', \nu\lambda}
			 	\end{gathered}
			 \hspace{-.5cm}
			 \right)_{[\mu\nu]} 
	\\
			 + 	
				\sum_{i\in I}
					\bigl(
						\partUnity_{i, \mu}
							\prescript{i}{}{\Gamma}_{\nu \lambda}^k
					\bigr)_{[\mu\nu]}
				+
					\sum_{\kappa}
					\prescript{\chart}{}{\Gamma}^k_{\mu\kappa}   \prescript{\chart}{}{\Gamma}^\kappa_{\nu\lambda}
			\\
				- 	
					\sum_{\kappa, l, l'} 
						\begin{multlined}[t]
							\prescript{\chart}{}{\Gamma}^k_{\mu\kappa} 
									(x_{\kappa,l} \circ x')	x'_{l, \nu \lambda}
								+ 
									\prescript{\chart}{}{\Gamma}^\kappa_{\nu\lambda} 
									(x_{k,l} \circ x')	x'_{l,\mu \kappa}
							\\
								+	
									(x_{k,l} \circ x')	x'_{l,\mu \kappa} 
									(x_{\kappa,l'} \circ x')_{,\lambda}	x'_{l', \nu}
						\end{multlined}
		\end{multlined}
	\\
		&= 
			\sum_{i\in I} \partUnity_i \sum_{l'}	\bigl( (x^i_{k ,l'} \circ y^i)_{,\mu}y^i_{l', \nu\lambda } \bigr)_{[\mu\nu]}
			+ \bigl(\partUnity_{i, \mu} \prescript{i}{}{\Gamma}_{\nu \lambda}^k\bigr)_{[\mu\nu]}
			+ \sum_{ \kappa} \bigl(\prescript{\chart}{}{\Gamma}^k_{\mu\kappa}   \prescript{\chart}{}{\Gamma}^\kappa_{\nu\lambda}\bigr)_{[\mu\nu]} \\
		&= 
			\prescript{\chart}{}\Riem^k_{\lambda\mu\nu}. 
			\qedhere
	\end{align*}
\end{proof}

\printbibliography

\end{document}